\newcommand{\orth}{{\perp}}
\newcommand{\vect}{{\operatorname {vec}}}
\newcommand{\epsm}{\varepsilon_{\rm mach}}
\newcommand{\veps}{\varepsilon}
\newcommand{\CalD}{\mathcal{D}}
\newcommand{\RR}{\mathbb{R}}
\newcommand{\BBB}{\mathcal{B}}
\newcommand{\CC}{\mathbb{C}}
\newcommand{\MM}{\mathbb{M}}
\newcommand{\kmax}{{k_{\max}}}
\newcommand{\gramschmidt}{{\tt gram\_schmidt}}
\newcommand{\infarnoldiexp}{{\tt infarn\_exp}}
\newtheorem{remark}[theorem]{Remark}
\newtheorem{defn}[theorem]{Definition}
\begin{document}

\title{
Computing a partial Schur factorization of\\ nonlinear
eigenvalue problems using\\ the infinite Arnoldi method
%
} 
\author{Elias Jarlebring, Karl Meerbergen, Wim Michiels}

\renewcommand{\thefootnote}{\fnsymbol{footnote}}

%
\renewcommand{\thefootnote}{\arabic{footnote}}



\selectlanguage{english}
\maketitle
\begin{abstract}
%
The partial Schur factorization can be used to  represent 
several eigenpairs of a matrix in a numerically robust way.
Different adaptions of the Arnoldi
method are often used to compute partial Schur factorizations.
We propose here a technique to compute a partial Schur factorization of a nonlinear eigenvalue 
problem (NEP). The technique is inspired by the
algorithm in \cite{Jarlebring:2010:TRINFARNOLDI}, now called the \emph{infinite
  Arnoldi method}. The infinite Arnoldi method 
is a method designed for NEPs, and
can be interpreted as Arnoldi's method applied to 
a linear infinite-dimensional operator, whose reciprocal eigenvalues are the solutions to the 
NEP. 
As a first result we show that the invariant
pairs of the operator are equivalent to invariant pairs of the NEP.
We characterize the structure of the invariant pairs of the operator and 
show how one can carry out a modification of   the infinite Arnoldi method by 
respecting the structure. This also allows us to naturally add 
the feature known as \emph{locking}.
We  nest this algorithm with an outer iteration, where
the infinite Arnoldi method for a particular type of structured functions is
appropriately restarted.
The restarting exploits the structure and is inspired by the well-known
implicitly restarted Arnoldi method for standard eigenvalue problems.
The final algorithm 
is applied to examples from a benchmark collection, showing
that both processing time 
and memory consumption can be considerably 
reduced with the restarting technique.

\end{abstract}

\section{Introduction}
The  \emph{nonlinear eigenvalue problem} (NEP) will in this paper be used 
to refer to the  problem to find
$\lambda\in\Omega\subseteq\CC$ and $v\in\CC^n\backslash\{0\}$ such that 
\begin{equation}
  M(\lambda)v=0, \label{eq:nlevpM}
\end{equation}
where $M:\Omega\rightarrow\CC^{n\times n}$ is analytic in $\Omega$, which is
an open disc centered 
at the origin.

This problem class has received a considerable amount
of attention in the literature. 
See, e.g., the survey papers 
\cite{Mehrmann:2004:NLEVP,Ruhe:1973:NLEVP} and the monographs
\cite{Lancaster:2002:LAMBDA,Gohberg:1982:MATRIXPOLYNOMIALS}.
The results for \eqref{eq:nlevpM} are often (but not always)  presented with
some restriction of the 
generality of $M$, such as the theory and algorithms 
for polynomial eigenvalue problems (PEPs) in \cite{Lancaster:2002:LAMBDA,Lancaster:2005:PSEUDOSPECTRA,Mackey:2006:STRUCTURED,Fassbender:2008:PCP}, 
in particular 
the algorithms for quadratic eigenvalue problems (QEPs) \cite{Tisseur:2001:QUADRATIC,Bai:2005:SOAR,Meerbergen:2001:LOCKING}, 
but also 
recent approaches for 
rational eigenvalue problems (REPs) \cite{Su:2008:REP,Voss:2003:MAXMIN}.
The results we will now present are directly related to \cite{Jarlebring:2010:TRINFARNOLDI}
where an algorithm is presented which we here call the \emph{infinite Arnoldi method}.
An important aspect of the algorithm in this paper, and the infinite Arnoldi method,
is \emph{generality}.
Although the algorithm and results of this paper are applicable to PEPs, QEPs and REPs, 
the primary goal of the paper is not to solve problems
for the most common structures,
but rather 
to construct an algorithm which can be applied to other,
less common NEPs in a somewhat automatic fashion. 
Some less common NEPs are  given in the problem collection 
\cite{Betcke:2010:NLEVPCOLL}; there exists NEPs with
 exponential terms  \cite{Jarlebring:2010:DELAYARNOLDI}
and implicitly
stated NEPs such as \cite{Rott:2010:ITERATIVE}.   

In this paper we will present a procedure to
 compute a partial Schur factorization in the sense of the
concepts of partial Schur factorizations and invariant pairs for nonlinear eigenvalue
problems introduced in  
 \cite{Kressner:2009:BLOCKNEWTON}. 
These concepts  can be summarized as follows. First note that the function $M$ 
is in this work assumed to be analytic and  
can always be decomposed as a sum of 
products of constant matrices and scalar nonlinearities,  
\begin{equation}
   M(\lambda)=M_1f_1(\lambda)+\cdots+M_mf_m(\lambda),\label{eq:M1f1}
\end{equation}
where $f_i:\Omega\rightarrow\CC$, $i=1,\ldots,m$ are analytic in
$\Omega$. 
%
We define 
\[
  \MM(Y,\Lambda):=M_1Yf_1(\Lambda)+\cdots+M_mYf_m(\Lambda),
\]
where $f_i(\Lambda)$, $i=1,\ldots,m$ are the matrix functions corresponding to $f_i$, 
which are well defined if $\sigma(\Lambda)\subset\Omega$. 
An invariant pair $(Y,\Lambda)\in\CC^{n\times p}\times\CC^{p\times p}$ 
(in the sense of \cite[Definition~1]{Kressner:2009:BLOCKNEWTON}) satisfies 
\begin{equation}
\MM(Y,\Lambda)=0.\label{eq:invpairMM}
\end{equation}
Additional appropriate orthogonality conditions for $Y$ and $\Lambda$ 
yield a consistent definition of invariant pairs and 
the eigenvalues of $\Lambda$ are 
solutions to the nonlinear eigenvalue problem.
In this setting, a partial Schur factorization corresponds to  a particular invariant pair where $\Lambda$ 
is an upper triangular matrix. 
%


The results of this paper are based on a reformulation of the problem of
finding an invariant pair of the nonlinear eigenvalue problem
%
as a corresponding problem
formulated with a (linear) infinite dimensional operator denoted $\BBB$,
also used in the infinite Arnoldi method  \cite{Jarlebring:2010:TRINFARNOLDI}. 
In \cite{Jarlebring:2010:TRINFARNOLDI}, we presented an algorithm
which can be interpreted as Arnoldi's method applied to the operator 
$\BBB$. Although the operator $\BBB$ maps functions to functions, it turns out 
that the algorithm can be implemented with finite-dimensional linear algebra
operations if the Arnoldi method (for $\BBB$) is started with a constant function. This 
results in a Krylov subspace consisting of polynomials. 
Unlike the polynomial setting in \cite{Jarlebring:2010:TRINFARNOLDI}, we will
in this work consider linear combinations of 
exponentials and polynomials allowing us to carry out an efficient restarting process. We will show that similar to the polynomial setting \cite{Jarlebring:2010:TRINFARNOLDI}, the Arnoldi method for $\BBB$ applied to linear combinations
of polynomials and exponentials can be carried out with 
finite-dimensional linear algebra operations.



The reformulation with the operator $\BBB$ allows us to adapt a procedure 
based on the Arnoldi method designed for the computation of a 
partial Schur factorization for standard eigenvalue problems. 
We will use a construction inspired by
the \emph{implicitly restarted Arnoldi method} (IRAM) 
\cite{Morgan:1996:RESTARTING,Sorensen:1992:IMPLICIT,Lehoucq:2001:IRAM,Lehoucq:1996:DEFLATION}.
The construction is first
outlined adaption is outlined in Section~\ref{sect:linearprobs}
and consists of two steps respectively given
in Section~\ref{sect:lockedinfarn} and 
Section~\ref{sect:restart}. They correspond to carrying out the Arnoldi 
method for the operator $\BBB$ with a locked invariant pair and a 
procedure to restart it. 

%

We finally wish to mention that there exist restarting schemes
for algorithms for special cases of \eqref{eq:nlevpM}, in particular for QEPs \cite{Zhou:2010:RESTARTED,Jia:2010:RSOAR}.


%
\section{Reformulation as infinite-dimensional operator problem}\label{sect:reformulation}
In order to characterize the invariant pairs of \eqref{eq:nlevpM}
for our setting
we first need to introduce some notation. 
The function
$B:\Omega\rightarrow\CC^{n\times n}$, will be defined by
\begin{equation}
  B(\lambda):=M(0)^{-1}\frac{M(0)-M(\lambda)}{\lambda}\label{eq:Bdef}
\end{equation}
for $\lambda\in\Omega\backslash\{0\}$ and 
defined  as the analytic continuation at $\lambda=0$. Note
that $B$ is also analytic in $\Omega$, under the condition that $\lambda=0$ is not a
solution to \eqref{eq:nlevpM}. We will in this work 
assume that the NEP is such that  $\lambda=0$ is not 
an eigenvalue. From the  definition \eqref{eq:Bdef} we reach 
a transformed nonlinear eigenvalue problem 
\begin{equation}
  \lambda B(\lambda)x=x.\label{eq:nlevpB}
\end{equation}
%
We will also use a decomposition of $B$ similar to the decomposition \eqref{eq:M1f1} of $M$. That is, we let
\begin{equation}
  B(\lambda)=B_1b_1(\lambda)+\cdots+B_mb_m(\lambda),
\end{equation}
where $b_i:\Omega\rightarrow\CC$, $i=1,\ldots,m$ are analytic in
$\Omega$. Moreover, we will use the straightforward coupling of 
the decomposition of $M$, 
by setting
\begin{equation}
   B_i=M(0)^{-1}M_i,\;\;b_i(\lambda)=\frac{f_i(0)-f_i(\lambda)}{\lambda }.\label{eq:BiMi}
\end{equation}

We will use the following notation in order to express the operator
and carry out manipulations of the operator in a concise way. Let the differentiation operator $B(\frac{d}{d\theta})$
be defined by the Taylor expansion in a consistent way, i.e., 
\[
  \left(B(\frac{d}{d\theta})\varphi\right)(\theta):=
B(0)\varphi(\theta)+\frac{1}{1!}B'(0)\varphi'(\theta)+
\frac{1}{2!}B''(0)\varphi''(\theta)+\cdots,
\]
where $\varphi:\CC\rightarrow\CC^n$ is a smooth function.
We are now ready to introduce the operator which serves as the 
basis for the algorithm.
%
\begin{defn}[The operator $\BBB$]\label{def:B}
Let $\BBB$ denote the map defined 
by the domain  $\CalD(\BBB):=\{\varphi\in C_\infty(\RR,\CC^n):
\sum_{i=0}^\infty B^{(i)}(0)\varphi^{(i)}(0)/(i!) \textrm{ is finite}\}$ and 
the action
\begin{equation}
  (\BBB\varphi)(\theta)=\int_{0}^\theta\varphi(\hat{\theta})\,d\hat{\theta}+C(\varphi),\label{eq:defB}
\end{equation}
where 
\begin{equation}
  C(\varphi):=   \sum_{i=0}^\infty \frac{1}{i!}B^{(i)}(0)\varphi^{(i)}(0)=
  \left(B(\frac{d}{d\theta})\varphi\right)(0).\label{eq:defBC}
\end{equation}
\end{defn}

Several properties of the operator $\BBB$ are characterized in
\cite{Jarlebring:2010:TRINFARNOLDI}. Most importantly, its reciprocal
eigenvalues are the solutions to \eqref{eq:nlevpB} and hence also to \eqref{eq:nlevpM} if $\lambda\neq 0$.  In this work we will need a more general result, characterizing the invariant pairs of $\BBB$.

To this end  we first define the
application 
 of the operator $\BBB$ to block functions, and say that
if  $\Psi:\CC\rightarrow\CC^{n\times p}$ with columns given by
\[
  \Psi(\theta)=(\psi_1(\theta),\ldots,\psi_p(\theta)),
\]
then $\BBB \Psi$ is interpreted in a block fashion, i.e., 
\[
  (\BBB \Psi)(\theta):=(\BBB\psi_1(\theta),\ldots,\BBB\psi_p(\theta)).
\]
With this notation, we can now consistently define an invariant pair
as a pair $(\Psi,R)$ of the operator $\BBB$, 
where $\Psi:\CC\rightarrow\CC^{n\times p}$ and
$R\in\CC^{p\times p}$ such that 
\begin{equation}
   (\BBB \Psi)(\theta)=\Psi(\theta)R.\label{eq:invpairdef}
\end{equation}
%

The following theorem explicitly shows the structure of the function $\Psi$ and 
relates invariant pairs of the operator with invariant
pairs \eqref{eq:invpairMM}, i.e., invariant pairs
in the setting in \cite{Kressner:2009:BLOCKNEWTON}.

%
%

\begin{theorem}[Invariant pairs of $\BBB$]\label{thm:invpairs}
Suppose $\Lambda\in\CC^{p\times p}$ is invertible and suppose 
$(\Psi,\Lambda^{-1})$ is an invariant pair of $\BBB$. 
Then, $\Psi$ can be expressed as, 
\begin{equation}
  \Psi(\theta)=Y\exp(\theta \Lambda),\label{eq:Fstruct}
\end{equation}
for some matrix $Y\in\CC^{n\times p}$. 
Moreover, given $\Lambda\in\CC^{p\times p}$ and $Y\in\CC^{n\times p}$ where $\Lambda$ is invertible, the following statements are equivalent:
\begin{itemize}
\item[i)] The pair $(\Psi,\Lambda^{-1})$, where $\Psi(\theta):=Y\exp(\theta \Lambda)$, is an invariant pair of the operator
  $\BBB$, i.e., 
\[
   (\BBB \Psi)(\theta)=\Psi(\theta)\Lambda^{-1}.
\]
\item[ii)] The pair $(Y,\Lambda)$ is an invariant pair of the
  nonlinear eigenvalue problem \eqref{eq:nlevpM} in the sense of
  \cite[Definition~1]{Kressner:2009:BLOCKNEWTON}, i.e.,
\begin{equation}
  \MM(Y,\Lambda)=0.
\end{equation}
\end{itemize}
\end{theorem}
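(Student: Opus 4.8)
The plan is to work directly from the defining equation $(\BBB\Psi)(\theta)=\Psi(\theta)\Lambda^{-1}$ and exploit the explicit form of $\BBB$ in Definition~\ref{def:B}.

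First I would establish the structural claim~\eqref{eq:Fstruct}. Applying $\BBB$ columnwise, the invariant pair relation reads
\[
  \int_0^\theta \Psi(\hat\theta)\,d\hat\theta + C(\Psi) = \Psi(\theta)\Lambda^{-1},
\]
where $C(\Psi)$ is the constant (in $\theta$) $n\times p$ matrix obtained by applying $C$ to each column. Differentiating with respect to $\theta$ kills the constant term and gives the linear matrix ODE $\Psi(\theta) = \Psi'(\theta)\Lambda^{-1}$, i.e. $\Psi'(\theta) = \Psi(\theta)\Lambda$. Hence $\Psi(\theta) = \Psi(0)\exp(\theta\Lambda)$, and setting $Y:=\Psi(0)$ yields~\eqref{eq:Fstruct}. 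One must check $\Psi\in\CalD(\BBB)$: since $\Psi$ is entire this is just the convergence condition $\sum_i B^{(i)}(0)\Psi^{(i)}(0)/i!$ finite, which for $\Psi(\theta)=Y\exp(\theta\Lambda)$ becomes $\big(B(\tfrac{d}{d\theta})\Psi\big)(0) = B(\Lambda^{\mathrm T})$-type expression acting on $Y$ — more precisely it equals $\sum_i \frac{1}{i!}B^{(i)}(0)Y\Lambda^i$, which converges because $\sigma(\Lambda)\subset\Omega$ and $B$ is analytic on $\Omega$ (here I would invoke the same reasoning used in \cite{Jarlebring:2010:TRINFARNOLDI} identifying this sum with the matrix function $\MM$-type object built from $B$).

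Next, for the equivalence i) $\Leftrightarrow$ ii), I would substitute $\Psi(\theta)=Y\exp(\theta\Lambda)$ into the displayed integral equation. The integral evaluates as $\int_0^\theta Y\exp(\hat\theta\Lambda)\,d\hat\theta = Y\big(\exp(\theta\Lambda)-I\big)\Lambda^{-1}$, using invertibility of $\Lambda$. The constant term is $C(\Psi) = \sum_{i=0}^\infty \frac{1}{i!}B^{(i)}(0)Y\Lambda^i$, which I claim equals $\sum_j B_j Y\, b_j(\Lambda)$ by the decomposition $B(\lambda)=\sum_j B_j b_j(\lambda)$ and the definition of matrix functions via Taylor series — write this as $\BB(Y,\Lambda)$ in the natural analogue of $\MM$. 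The invariant pair equation then collapses to
\[
  Y\big(\exp(\theta\Lambda)-I\big)\Lambda^{-1} + \sum_j B_j Y\, b_j(\Lambda) = Y\exp(\theta\Lambda)\Lambda^{-1},
\]
and the $\theta$-dependent terms cancel identically, leaving the $\theta$-independent identity $-Y\Lambda^{-1} + \sum_j B_j Y\, b_j(\Lambda) = 0$, i.e. $\sum_j B_j Y\, b_j(\Lambda) = Y\Lambda^{-1}$. Finally I would translate this back to $M$: using $B_j = M(0)^{-1}M_j$ and $b_j(\lambda) = (f_j(0)-f_j(\lambda))/\lambda$ from~\eqref{eq:BiMi}, together with $\sum_j M_j f_j(0) = M(0)$ so that $\sum_j B_j f_j(0) = I$, one rewrites $\sum_j B_j Y b_j(\Lambda)$ as $M(0)^{-1}\big(Y - \sum_j M_j Y f_j(\Lambda)\big)\Lambda^{-1} = M(0)^{-1}\big(Y - \MM(Y,\Lambda)\big)\Lambda^{-1}$. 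Substituting into the collapsed identity and multiplying through by $M(0)$ and then by $\Lambda$ on the right gives exactly $\MM(Y,\Lambda)=0$. Each implication is reversible, so i) $\Leftrightarrow$ ii).

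The main obstacle is the careful bookkeeping with matrix functions: one must justify that $C(\Psi) = \sum_i \frac{1}{i!}B^{(i)}(0)Y\Lambda^i$ indeed equals $\sum_j B_j Y b_j(\Lambda)$ and that the series converges — this requires commuting the infinite sum over Taylor coefficients with the finite sum over $j$ and invoking $\sigma(\Lambda)\subset\Omega$ together with analyticity of each $b_j$ on $\Omega$. The manipulation $b_j(\lambda) = (f_j(0)-f_j(\lambda))/\lambda \Rightarrow b_j(\Lambda) = (f_j(0)I - f_j(\Lambda))\Lambda^{-1}$ at the matrix-function level also needs the standard fact that the identity $\lambda\, b_j(\lambda) = f_j(0)-f_j(\lambda)$, holding on $\Omega$, lifts to matrix arguments when $\sigma(\Lambda)\subset\Omega$; this is routine but worth stating explicitly, and it is where the hypothesis that $0$ is not an eigenvalue (so $B$ is analytic at $0$) quietly gets used.
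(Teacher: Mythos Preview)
Your argument is correct and follows essentially the same route as the paper: differentiate the invariant-pair relation to obtain the matrix ODE $\Psi'=\Psi\Lambda$ for the structural claim, then for i)$\Leftrightarrow$ii) compute $C(\Psi)=\sum_j B_j Y\,b_j(\Lambda)$ and translate back to $\MM$ via~\eqref{eq:BiMi}. The paper isolates this second computation as a standalone technical lemma (Lemma~\ref{thm:structinvres}), which states $(\BBB F)(\theta)-F(\theta)S^{-1}=-M(0)^{-1}\MM(Y,S)S^{-1}$ for $F(\theta)=Y\exp(\theta S)$; your inline derivation is precisely the same calculation.

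One bookkeeping slip to correct: $\sum_j B_j Y\,b_j(\Lambda)$ equals $\big(Y-M(0)^{-1}\MM(Y,\Lambda)\big)\Lambda^{-1}$, not $M(0)^{-1}\big(Y-\MM(Y,\Lambda)\big)\Lambda^{-1}$, because $\sum_j B_j f_j(0)=I$ already gives $\sum_j B_j Y f_j(0)=Y$ (no extra factor of $M(0)^{-1}$). With this fix, substituting into $\sum_j B_j Y\,b_j(\Lambda)=Y\Lambda^{-1}$ and cancelling $Y\Lambda^{-1}$ yields $M(0)^{-1}\MM(Y,\Lambda)\Lambda^{-1}=0$, hence $\MM(Y,\Lambda)=0$ as you intended.
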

\begin{proof}
By differentiating (with respect to $\theta$) the left and right-hand side of the definition of
an invariant pair \eqref{eq:invpairdef} and using that the action of 
$\BBB$ is integration, we find that $\Psi$ satisfies the matrix differential equation,
\[
   \Psi(\theta)=\Psi'(\theta)\Lambda^{-1}.
\]
By multiplying by $\Lambda$ and vectorizing the equation, we have
\[
   (\Lambda^T\otimes I)\vect(\Psi(\theta))=\frac{d}{d\theta}\vect(\Psi(\theta)),
\]
and we can form an explicit solution,
\[
   \vect(\Psi(\theta))=\exp(\theta \Lambda^T\otimes I)y_0=(\exp(\theta \Lambda)^T\otimes I)y_0.
\]
The conclusion \eqref{eq:Fstruct}  follows by reversing the vectorization and setting $\vect(Y)=y_0$.

The equivalence between statements i) and ii) follows directly from the fact that $M(0)$ 
is invertible (since $\Lambda$ is invertible and $\lambda=0$ is not an eigenvalue) and
the application of Lemma~\ref{thm:structinvres}.
\end{proof}

\section{Outline of the algorithm}\label{sect:linearprobs}

We now know (from  Theorem~\ref{thm:invpairs}) that an invariant pair
of the NEP \eqref{eq:nlevpM} is  equivalent 
to an invariant pair of the linear operator $\BBB$.
The general idea of the procedure we will present in later sections is 
inspired by the procedures used to compute partial Schur factorizations for 
 standard eigenvalue problems with the Arnoldi method \cite{Lehoucq:1996:DEFLATION,Sorensen:1992:IMPLICIT,Stewart:2001:KRYLOVSCHUR}. 
We will carry out a variant of  the corresponding algorithm for 
the operator $\BBB$. More precisely, we will repeat the following two steps. 

In the first step (described in Section~\ref{sect:lockedinfarn}) we compute, in a particular way, an orthogonal projection 
of the operator $\BBB$ onto a Krylov subspace. 
The projection is 
constructed such that it possesses the feature known as \emph{locking}. This
here means that given a partial Schur factorization (or an approximation of the partial Schur factorization) the projection respects
the invariant subspace and returns an approximation containing
the invariant pair (without modification)
and also approximations of further eigenvalues.
This prevents repeated
convergence to the eigenvalues in the locked 
partial Schur factorization in a robust way.

%
In the literature (for standard eigenvalue problems) this projection 
is often 
computed
with a variation of the Arnoldi method. 
 More precisely, we will start the Arnoldi algorithm 
with a state containing the (locked) partial Schur factorization. 
The result of the infinite Arnoldi method 
can be expressed as 
what is commonly called an \emph{Arnoldi factorization},
\begin{equation}
     (\BBB F_k)(\theta)=F_{k+1}(\theta) 
\underline{H}_k,\label{eq:arnfact}
\end{equation}
where $\underline{H}_k\in\CC^{(k+1)\times k}$ is a Hessenberg matrix, 
$F_ {k+1}:\CC\rightarrow\CC^{n\times k}$ is an orthogonal 
basis of the Krylov subspace and $F_k$ is the first $k$ columns of $F_{k+1}$.
In this paper we  use a common notation for Hessenberg matrices; the first $k$ rows of the matrix $\underline{H}_k$ 
will be denoted $H_k\in\CC^{k\times k}$.
A property of the locking feature
is that 
the Hessenberg matrix $\underline{H}_k$ has  the structure
\[
\underline{H}_k=
\begin{pmatrix}
(\underline{H}_k)_{1,1} & (\underline{H}_k)_{1,2}\\
  & (\underline{H}_k)_{2,2}
\end{pmatrix}
\]
where $R=(\underline{H}_k)_{1,1}\in\CC^{p_l\times p_l}$ is an upper triangular matrix. The upper left block of the $\underline{H}_k$ is called the \emph{locked} part, 
since the first $p_l$ columns of \eqref{eq:arnfact} is the equation for an  invariant pair \eqref{eq:invpairdef}.

In the first  step we show how the Arnoldi method with locking
can be carried out if we represent the functions in the algorithm (and 
in the factorization \eqref{eq:arnfact}) in a structured way. Unlike
the infinite Arnoldi method in  \cite{Jarlebring:2010:TRINFARNOLDI} 
we will need to work with functions 
which are linear combinations of 
exponentials and polynomials. 
It turns out that, similar to \cite{Jarlebring:2010:TRINFARNOLDI},
the action of the operator as well as the entire Arnoldi algorithm
can be carried out with finite-dimensional arithmetic, while the use
of exponentials is benificial also for the second step.

In the second step (described in Section~\ref{sect:restart}), i.e., after computing the Arnoldi factorization \eqref{eq:arnfact}, we process  
the factorization such that two 
types of information can be  extracted.
\begin{itemize}
\item We extract converged eigenvalues from the Arnoldi factorization \eqref{eq:arnfact} and store those in a partial Schur factorization. 
Due to the locking feature, the updated partial Schur factorization will be 
of the same size as the locked part of \eqref{eq:arnfact}  or larger.  
\item We extract a function with  favorable approximation
properties for those eigenvalues of interest, which have not yet converged.
\end{itemize}
This information is extracted in a fashion
similar to 
implicitly restarted Arnoldi (IRAM) 
 \cite{Morgan:1996:RESTARTING,Lehoucq:2001:IRAM,Lehoucq:1996:DEFLATION,Stewart:2001:KRYLOVSCHUR}. However, several modifications
are necessary in order to restart with the structured functions.


The two steps are subsequently 
iterated by starting the (locked version) of Arnoldi's method with 
the extracted function and with (the possibly larger) partial Schur 
factorization. 
Thus, nesting the infinite Arnoldi method with a restarting scheme
which is expected to eventually converge to a partial Schur factorization.

\section{The infinite Arnoldi method with locked invariant pair}\label{sect:lockedinfarn}
In the first step of the conceptual algorithm described in 
Section~\ref{sect:linearprobs}, we need to carry out an
Arnoldi algorithm for $\BBB$ with the preservation feature that the
given partial Schur factorization is not modified.  
In an infinite-dimensional setting, the adaption to achieve this feature with the Arnoldi method 
is straightforward by initiating the state of the Arnoldi method with
the invariant pair. The procedure is given in
Algorithm~\ref{alg:arnoldiBBB}, where the basis of the invariant
subspace associated with the 
partial Schur factorization is assumed to be orthogonal with respect  to a given 
scalar product $<\cdot,\cdot>$.

%
%
%
%
%
%

\begin{algorithm}[h]
\caption{}
\label{alg:arnoldiBBB}
\begin{algorithmic}[1]
\INPUT 
A partial Schur factorization of $\BBB$  represented by $(\Psi,R)$ and a
function $f:\CC\rightarrow\CC^{n}$  
such that $<f,f>=1$ and such that $f$ is orthogonal to the columns of
$\Psi$ with respect to  $<\cdot,\cdot>$.
%
\OUTPUT An Arnoldi factorization of $\BBB$ represented by $(\varphi_1,\ldots,\varphi_\kmax)$ and $H_{\kmax+1,\kmax}$
\STATE Set $H_{p_l,p_p}=R$
\STATE Set $(\varphi_1,\ldots,\varphi_{p_l})=\Psi$
\STATE Set $\varphi_{p_l+1}=f$
\FOR {$k=p_l+1,\ldots,k_{\max}$ }
\STATE  $\psi= \BBB\varphi_{k}$
\FOR {$i=1,\ldots,k$}
\STATE  $h_{i,k}=<\psi,\varphi_i>$
\STATE  $\psi= \psi- h_{i,k}\varphi_i$
\ENDFOR
\STATE  $h_{k+1,k}=\sqrt{<\psi,\psi>}$
\STATE  $\varphi_{k+1}=\psi/h_{k+1,k}$
\ENDFOR 
\end{algorithmic}
\end{algorithm}

\subsection{Representation of structured functions}\label{sect:struct}
In later sections we will provide a specialization  
of all the steps in the abstract algorithm above (Algorithm~\ref{alg:arnoldiBBB})
such that we can implement it in 
finite-dimensional arithmetic.
The first step in the  conversion of Algorithm~\ref{alg:arnoldiBBB} into
a finite-dimensional algorithm is to select an appropriate 
starting function and an appropriate 
finite-dimensional representation of the functions.
%
%

%

In this work, we will consider functions 
which are sums of exponentials and polynomials with the structure
\begin{equation}
  \varphi(\theta)=Ye^{S\theta}c+q(\theta)\label{eq:exppolystruct}
\end{equation}
where $Y\in\CC^{n\times p}$, $S\in\CC^{p\times p}$, $c\in\CC^{p}$ and $q:\CC\rightarrow\CC^n$ is a vector of polynomials. 
Moreover, we let $S$ be a  block triangular matrix
\begin{equation}
   S=
\begin{pmatrix}
   S_{11} & S_{12}\\  
    0 & S_{22}
\end{pmatrix},\label{eq:Sstruct0}
\end{equation}
and set $S_{11}=R^{-1}\in\CC^{p_l\times p_l}$ where $p_l\le p$, where $R$ will later be chosen such that it is an approximation of the matrix in the Schur factorization.  This structure has a number of favorable properties important for our situation.
\begin{itemize}
\item The action of $\BBB$ applied to  functions of the type \eqref{eq:exppolystruct} can be carried out 
in an efficient way using only  finite-dimensional operations. 
This stems from the property that the action of $\BBB$ corresponds 
to integration and the set of polynomials and exponentials under consideration
are closed under integration.
Algorithmic details will be given in Section~\ref{sect:action}.
\item This particular structure allows 
the storing and orthogonalization 
against an invariant subspace, which, according to Theorem~\ref{thm:invpairs}, 
has exponential structure.
\item The structure provides a freedom
to choose the blocks $S_{12}$ and $S_{22}$. This 
allows us to appropriately restart the algorithm. 
Due to the
exponential structure illustrated in Theorem~\ref{thm:invpairs},
it will turn out to be natural to impose an exponential
structure on the Ritz functions in order to construct a
function $f$ to be used in the restart.
The precise choice of $S_{12}$, $S_{22}$ and $Y$ will 
be further explained in Section~\ref{sect:restart}.
\end{itemize}

In practice we also need to store 
the structured functions 
in some fashion, preferably with matrices and vectors. It is tempting to store
the exponential part and polynomial part of \eqref{eq:exppolystruct} 
separately, i.e., to 
store the exponential part 
 with the variable $Y$, $S$ and $c$ and the polynomial
part by coefficients in some polynomial basis, e.g., the coefficients
 $y_0,\ldots,y_{N-1}$  in the monomial basis
$q(\theta)=y_0+y_1\theta+\cdots+y_{N-1}\theta^{N-1}$. 
Although such an approach is natural from a theoretical perspective,
it is not adequate from a numerical perspective. This can be seen as follows.
Note that the Taylor expansion of the structured function \eqref{eq:exppolystruct} is
\begin{multline}
\varphi(\theta)=
(Yc+y_0)   +
\left(\frac{1}{1!}YSc+y_1\right)\theta +
\cdots +            
\left(\frac{1}{(N-1)!}YS^{N-1}c+y_{N-1}\right)\theta^{N-1}+\\
\left(\frac{1}{N!}YS^{N}c\right) \theta^{N}+
\left(\frac{1}{(N+1)!}YS^{N+1}c\right) \theta^{N+1}+\cdots.\label{eq:naive}
\end{multline}
A potential source of cancellation is apparent 
for the first
$N$ terms in \eqref{eq:naive}  if the polynomial
$q(\theta)$ approximates  $-Y\exp(\theta S)c$. 
This turns out to be a situation appearing in practice in this
algorithm, 
making the
storing of the structured functions in this separated form inadequate.

We will instead use a function representation where 
the coefficients in the Taylor expansion are not formed by sums. 
This can be achieved by  replacing the first $N$ terms in \eqref{eq:naive}
by new coefficients $x_0,\ldots,x_{N-1}$, i.e.,
\begin{equation}
  \varphi(\theta)
=\\
x_0+x_1\theta+\cdots+ x_{N-1}\theta^{N-1}+
\frac{1}{N!}(YS^{N}c)\theta^N+
\frac{1}{(N+1)!}(YS^{N+1}c)\theta^{N+1}+\cdots.\label{eq:phistruct}
\end{equation}
The structured functions \eqref{eq:exppolystruct}
will be represented with the four variables $Y\in\CC^{n\times p}$,
$S\in\CC^{p\times p}$, $c\in\CC^p$,
$x\in\CC^{Nn}$, where $x^T=(x_0^T,\ldots,x_{N-1}^T)$.
Note that this representation does not suffer from the potential 
cancellation effects present in the naive representation \eqref{eq:naive}.

%

Throughout this work we will need to carry out many manipulations of functions 
represented 
in the form \eqref{eq:phistruct} and we
need a  concise notation.
Let $\exp_N$ denote the
remainder part of the truncated Taylor expansion of the exponential, 
i.e., 
\begin{equation}
  \exp_N(\theta S):=\exp(\theta S)-I-\frac{1}{1!}S-\cdots-\frac{1}{N!}S^N.\label{eq:defexpN}
\end{equation}
This can equivalently be expressed as, 
\begin{equation}
  \exp_N(\theta S)=
\frac{1}{(N+1)!}\theta^{N+1}S^{N+1}+
\frac{1}{(N+2)!}\theta^{N+2}S^{N+2}+\cdots\label{eq:defexpN2}
\end{equation}
with
\[
\exp_{-1}(\theta S):=\exp(\theta S).
\]
With this notation, we can now concisely express \eqref{eq:phistruct} with
$\exp_N$ and Kronecker products, 
\begin{equation}
  \varphi(\theta)=   
Y \exp_{N-1}(\theta S)c+
\left((1,\theta,\theta^2,\ldots,\theta^{N-1})\otimes I_n\right)x.\label{eq:phistruct2}
\end{equation}

\subsection{Action for structured functions}\label{sect:action}

We have now (in Section~\ref{sect:struct}) introduced 
the function structure and shown how we can
 represent these functions
with matrices and vectors. 
An important component in Algorithm~\ref{alg:arnoldiBBB} is the action of $\BBB$.  We will now show how we can compute the action of $\BBB$ applied to
 a  function given with the representation 
\eqref{eq:phistruct2}.

Analogous to the definition of $\exp_N$, it will be convenient to 
introduce a notation for the remainder part of the nonlinear eigenvalue problem $\MM$ after
a Taylor expansion to order $N$. We define, 
\begin{equation}
 \MM_N(Y,S):=\MM(Y,S)-M(0)Y-\frac{1}{1!}M'(0)YS-\frac{1}{2!}M''(0)YS^2-\cdots-
\frac{1}{N!}M^{(N)}(0)YS^{N}\label{eq:MMNdef} 
\end{equation}
or equivalently,
\begin{equation}
  \MM_N(Y,S)=
\frac{1}{(N+1)!}M^{(N+1)}(0)YS^{N+1}+
\frac{1}{(N+2)!}M^{(N+2)}(0)YS^{N+2}+\cdots.\label{eq:MMNdef2} 
\end{equation}
Note that with this definition
\begin{eqnarray*}
   \MM_{-1}(Y,S)&=&\MM(Y,S).
\end{eqnarray*}

We are now ready to express the  action of $\BBB$ applied 
to functions with the structure \eqref{eq:phistruct2}.
Note that  the construction of the new function $\varphi_+=\BBB \varphi$ in the following result only
involves standard linear algebra operations of matrices and vectors.

\begin{theorem}[Action for structured functions]\label{thm:structaction}
Let $S\in\CC^{p\times p}$ and
$c\in\CC^p$ be given constants, where $S$ is invertible.
Suppose 
\begin{equation}
  \varphi(\theta)= 
Y\exp_{N-1}(\theta S)c+
\left((1,\theta,\theta^2,\ldots,\theta^{N-1})\otimes I_n\right)x\label{eq:structaction:phi}
\end{equation}
Then, 
\begin{equation}
\varphi_+(\theta):=(\BBB\varphi)(\theta)= 
Y\exp_{N}(\theta S)c_++
\left((1,\theta,\theta^2,\ldots,\theta^{N})\otimes I_n\right)x_+\label{eq:phi+}
%
%
%
\end{equation}
where
\begin{equation}
 c_+=S^{-1}c,\label{eq:c+}
\end{equation}
\begin{equation}
  (x_{+,1},\ldots,x_{+,N}) = (x_0,\ldots,x_{N-1})
\begin{pmatrix}
1 &                     &&\\ 
  & \frac12             &&\\
  &                     &\ddots&\\
  &                     & &\frac1{N}\\
\end{pmatrix},\label{eq:X+}
\end{equation}
and 
\begin{equation}
x_{+,0}=
-M(0)^{-1}
\left(
\MM_{N}(Y,S)c_+
+
\sum_{i=1}^{N}M^{(i)}(0)x_{+,i}\right).
\label{eq:y0}
\end{equation}
\end{theorem}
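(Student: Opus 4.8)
The plan is to compute $(\BBB\varphi)(\theta)$ directly by applying the defining formula \eqref{eq:defB}, namely $(\BBB\varphi)(\theta)=\int_0^\theta\varphi(\hat\theta)\,d\hat\theta+C(\varphi)$, to the structured function \eqref{eq:structaction:phi} and then recognizing the result in the form \eqref{eq:phi+}. The integral term is the easy part: the two summands of $\varphi$ integrate separately. For the polynomial piece, $\int_0^\theta\hat\theta^{\,j}\,d\hat\theta=\theta^{j+1}/(j+1)$, which accounts precisely for the rescaling \eqref{eq:X+} of the coefficients $x_0,\ldots,x_{N-1}$ into $x_{+,1},\ldots,x_{+,N}$ (the $j$-th coefficient moves up one degree and picks up a factor $1/(j+1)$). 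For the exponential piece, I would use that $S$ is invertible to write $\int_0^\theta Y\exp_{N-1}(\hat\theta S)c\,d\hat\theta = Y\big(\exp_{N-1}(\theta S)-\exp_{N-1}(0\cdot S)\big)S^{-1}c$; since $\exp_{N-1}(0\cdot S)=0$ by \eqref{eq:defexpN2}, and since term-by-term integration of the series \eqref{eq:defexpN2} for $\exp_{N-1}$ shifts it to $\exp_N$, this gives exactly $Y\exp_N(\theta S)c_+$ with $c_+=S^{-1}c$, establishing \eqref{eq:c+}. So after the integration step the expression already has the shape \eqref{eq:phi+} except that its constant ($\theta^0$) coefficient is still $x_{+,0}=C(\varphi)$, which remains to be identified.

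The second and more delicate step is to show that $C(\varphi)$ equals the right-hand side of \eqref{eq:y0}. Here I would unwind the definition \eqref{eq:defBC}, $C(\varphi)=\big(B(\tfrac{d}{d\theta})\varphi\big)(0)=\sum_{i\ge0}\tfrac{1}{i!}B^{(i)}(0)\varphi^{(i)}(0)$, and relate the differentiation operator built from $B$ to one built from $M$, using the relation \eqref{eq:Bdef}, i.e. $M(0)B(\lambda)=(M(0)-M(\lambda))/\lambda$, equivalently $\lambda M(0)B(\lambda)=M(0)-M(\lambda)$. Translated to differentiation operators acting on a smooth function $\varphi$, this reads $M(0)\,(B(\tfrac{d}{d\theta})\varphi)(\theta)=M(0)\varphi'(\theta)-\big(M(\tfrac{d}{d\theta})\varphi'\big)(\theta)$ — intuitively, the factor $\lambda$ on the left becomes a derivative. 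Evaluating at $\theta=0$ gives $M(0)C(\varphi)=M(0)\varphi'(0)-\big(M(\tfrac{d}{d\theta})\varphi'\big)(0)$. I then plug in $\varphi'$ computed from \eqref{eq:structaction:phi}: differentiating the exponential part gives $Y\exp_{N-2}(\theta S)Sc$ (derivative shifts the remainder index down by one and introduces a factor $S$), and differentiating the polynomial part gives $\sum_{j=1}^{N-1}j\,\theta^{j-1}x_j$. Applying $M(\tfrac{d}{d\theta})$ and evaluating at $0$ splits into the exponential contribution, which assembles into $\MM(Y,S)Sc - \sum_{i=0}^{N-1}\tfrac1{i!}M^{(i)}(0)YS^{i+1}c$, and the polynomial contribution $\sum_{i=0}^{N-1}M^{(i)}(0)x_{i+1}\cdot(\text{combinatorial factor})$; careful bookkeeping of the factorials then collapses the exponential part to $\MM_{N}(Y,S)Sc$ via \eqref{eq:MMNdef}–\eqref{eq:MMNdef2} after using $c_+ = S^{-1}c$ so that $Sc = $ appropriate combination, and the polynomial part to $\sum_{i=1}^N M^{(i)}(0)x_{+,i}$ via the rescaling \eqref{eq:X+}. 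Dividing through by $M(0)$ (which is invertible by the standing assumption that $\lambda=0$ is not an eigenvalue) yields \eqref{eq:y0}.

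I expect the main obstacle to be the index-shifting bookkeeping in the second step: keeping the factorials, the powers of $S$, the truncation indices in $\exp_N$ and $\MM_N$, and the shift in the polynomial coefficient indices all consistent, so that the "naive" sum $\sum_i \tfrac1{i!}M^{(i)}(0)(\cdots)$ separates cleanly into exactly the truncated-remainder term $\MM_N(Y,S)c_+$ and the finite sum $\sum_{i=1}^N M^{(i)}(0)x_{+,i}$. A secondary technical point worth a sentence is justifying the manipulation "multiplication by $\lambda$ corresponds to $\tfrac{d}{d\theta}$ followed by evaluation at $0$" at the level of the operator series — this is formal but can be made rigorous term by term on the Taylor coefficients, since the condition defining $\CalD(\BBB)$ guarantees the relevant series converge. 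Once that identification is in place, matching \eqref{eq:phi+} is immediate: the integral step supplies the $\theta^1,\ldots,\theta^N$ coefficients and the exponential tail with $c_+$, and the constant-term computation supplies $x_{+,0}$.
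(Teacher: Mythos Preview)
Your integration step is correct and is the same computation the paper does (in reverse, by checking $\varphi_+'=\varphi$). The second step, however, has a concrete error in the operator-identity translation. From $\lambda M(0)B(\lambda)=M(0)-M(\lambda)$, equivalently $M(0)B(\lambda)=-\sum_{k\ge 1}\tfrac{1}{k!}M^{(k)}(0)\lambda^{k-1}$, the correct consequence is
\[
M(0)\,C(\varphi)=-\sum_{k\ge 1}\tfrac{1}{k!}M^{(k)}(0)\,\varphi^{(k-1)}(0),
\]
with one derivative \emph{fewer} on $\varphi$, not one more. Your displayed identity with $\varphi'$ on the right is off by two indices; carried through, the exponential contribution becomes $\MM_{N-2}(Y,S)\,Sc$ rather than $\MM_N(Y,S)\,S^{-1}c$, and no amount of factorial bookkeeping repairs that --- your sentence ``after using $c_+=S^{-1}c$ so that $Sc=$ appropriate combination'' is precisely where the argument breaks.

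The paper sidesteps this trap by a cleaner device: it \emph{defines} $\varphi_+$ by \eqref{eq:c+}--\eqref{eq:y0}, observes that $\varphi_+$ is then a primitive of $\varphi$, and uses the relations \eqref{eq:BiMi} to rewrite the target $(\BBB\varphi)(0)=\varphi_+(0)$ as the equivalent condition $\big(M(\tfrac{d}{d\theta})\varphi_+\big)(0)=0$. Since $\big(g(\tfrac{d}{d\theta})\exp_N(\theta S)\big)(0)=g_N(S)$ for any analytic $g$, applying $M(\tfrac{d}{d\theta})$ directly to the already-constructed $\varphi_+$ yields $\MM_N(Y,S)c_+ + \sum_{i=0}^N M^{(i)}(0)x_{+,i}$ with no index shift at all, and \eqref{eq:y0} is exactly the statement that this vanishes. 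Your route is salvageable --- replace $\varphi'$ by the primitive $\varphi_+$ throughout and it becomes the paper's argument --- but as written the shift goes the wrong way.
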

\begin{proof}
We show that $\varphi_+$ constructed by \eqref{eq:c+}, \eqref{eq:X+} and \eqref{eq:y0} satisfies
\begin{equation}
  \BBB\varphi=\varphi_+,\label{eq:y0proof1}
\end{equation}
by first showing that the derivative of the 
left and the derivative of the right-hand side of \eqref{eq:y0proof1} are equal and 
then showing that they are also equal in one point $\theta=0$.
From the property \eqref{eq:defexpN2}, we have
\begin{equation}
  \frac{d}{d\theta}\exp_N(\theta S)=
\frac{1}{N!}\theta^{N}S^{N+1}+
\frac{1}{(N+1)!}\theta^{N+1}S^{N+2}+\cdots
=\exp_{N-1}(\theta S)S.\label{eq:y0proofdexp}
\end{equation}
Moreover, the relation \eqref{eq:X+} implies that
\begin{equation}
\frac{d}{d\theta}\left((1,\theta,\theta^2,\ldots,\theta^{N})\otimes I_n\right)x_+
=\left((1,\theta,\theta^2,\ldots,\theta^{N-1})\otimes I_n\right)x.\label{eq:y0proofdpoly}
\end{equation}
Note that $\BBB$ corresponds to integration and the left-hand side of
\eqref{eq:y0proof1} is $\varphi$. The right-hand side can be 
differentiated using \eqref{eq:y0proofdexp} and \eqref{eq:y0proofdpoly}. 
We reach that the right-hand side of \eqref{eq:y0proof1} is $\varphi$ 
by using \eqref{eq:c+}.

%
We have shown that the derivative of the 
left and the derivative of the right-hand side of \eqref{eq:y0proof1} are equal. 

We now evaluate \eqref{eq:y0proof1} at  $\theta=0$. From the definition of $\BBB$ 
we have that
that 
$(\BBB\varphi)(0)=\left(B(\frac{d}{d\theta})\varphi\right)(0)$, i.e., we wish to
show that
\begin{equation}
  (\BBB \varphi)(0)=\left(B(\frac{d}{d\theta})\varphi\right)(0)=
 \varphi_+(0)=x_0,\label{eq:y0proof2} 
\end{equation}
when $N>0$. (The relation obviously holds for $N=0$.) 
Note that by construction $\varphi_+$ is a primitive function of $\varphi$.
From the relations between $f_i$, $b_i$, $M_i$, $B_i$, in \eqref{eq:BiMi} 
it follows that \eqref{eq:y0proof2} is equivalent to 
\begin{equation}
  0=\left(\left(M_1f_1(\frac{d}{d\theta})+\cdots+M_mf_m(\frac{d}{d\theta})\right)\varphi_+\right)(0)=
(M(\frac{d}{d\theta})\varphi_+)(0).\label{eq:y0proofphi+}
\end{equation}
We now consider the terms of $\varphi_+$ in \eqref{eq:phi+} separately. 
Note that for any analytic function $g:\Omega \rightarrow\CC$, 
we have
\[
  \left(g(\frac{d}{d\theta})\exp_N(\theta S)\right)(0)=g_N(S),
\]
where $g_N$ is the remainder term in the truncated Taylor expansion, 
analogous to $\exp_N$. It follows that, 
\begin{equation}
\left(\left(M_1f_1(\frac{d}{d\theta})+\cdots+M_mf_m(\frac{d}{d\theta})\right)
Y \exp_N(\theta S)c_+\right)(0)=\MM_N(Y,S)c_+.\label{eq:y0proofexp}
\end{equation}
For the polynomial part of $\varphi_+$ we have
\begin{equation}
(M(\frac{d}{d\theta})
\left((1,\theta,\theta^2,\ldots,\theta^{N})\otimes I_n\right)x_+)(0)=
\sum_{i=0}^NM^{(i)}(0)x_{+,i}.\label{eq:y0proofpoly}
\end{equation}
Note that $(M(\frac{d}{d\theta})\varphi_+)(0)$ is the sum of \eqref{eq:y0proofexp}. Hence, we have shown \eqref{eq:y0proofphi+} (and hence also \eqref{eq:y0proof2})
by using \eqref{eq:y0proofexp}, \eqref{eq:y0proofpoly} and 
the definition of $x_{+,0}$  in \eqref{eq:y0}.
%
%
%

%

\end{proof}

\subsection{Scalar product and finite-dimensional specialization of Algorithm~\ref{alg:arnoldiBBB}}\label{sect:algorithmic}

Since the goal is 
to completely specify all operations in  Algorithm~\ref{alg:arnoldiBBB}
in a  finite-dimensional setting, 
we also need to  provide a scalar product. 
In \cite{Jarlebring:2010:TRINFARNOLDI} we worked
with polynomials and we defined the scalar product
via the Euclidean scalar product on monomial or Chebyshev
coefficients. 
The structured functions described in Section~\ref{sect:struct} are
not polynomials.
We can however still define the scalar product
consistent with \cite{Jarlebring:2010:TRINFARNOLDI}. 
In this work we restrict the presentation to the 
consistent extension of the definition of the scalar products via 
the monomial coefficients. Given two functions
\[
  \varphi(\theta)=\sum_{j=0}^\infty \theta^jx_j,\;\;
  \psi(\theta)=\sum_{j=0}^\infty \theta^jz_j,\;\;
\] 
we define
\begin{equation}
 <\varphi,\psi>:=\sum_{i=0}^\infty z_i^Hx_i.\label{eq:scalarprod_def0}
\end{equation}
It is straightforward to show that \eqref{eq:scalarprod_def0} satisfies
the properties of a scalar product and 
 that the sum in  \eqref{eq:scalarprod_def0}
is always finite for functions of the considered structure.
The computational details for the scalar product and the 
orthogonalization process are postponed until the next section
(Section~\ref{sect:scalarprod}).

The combination of the above results, i.e., the choice of the representation 
of the function structure (Section~\ref{sect:struct}), the operator action
(Section~\ref{sect:action}) and the scalar product \eqref{eq:scalarprod_def0}, 
forms a  complete specialization of all the operations in
 Algorithm~\ref{alg:arnoldiBBB}. 
For reasons of numerical efficiency, 
we will 
slightly modify the
direct implementation of the operations.


%
%
%
%
%

Instead of representing the individual functions $\varphi_1,\ldots,\varphi_k$  
of the basis $(\varphi_1,\ldots,\varphi_k)$ 
we will use a block representation and denote 
\[
  F_k(\theta)=(\varphi_1,\ldots,\varphi_k).
\]
Now note that variables $Y$ and $S$ in the function structure 
\eqref{eq:phistruct2} are not modified in Theorem~\ref{thm:structaction} and
obviously not modified when forming linear combinations. Hence,
the variables $Y$ and $S$ can be kept constant throughout the algorithm. This allows
us to also use the structured representation \eqref{eq:phistruct2} directly
for
the block function $F_k$ instead 
of individually for $\varphi_1,\ldots,\varphi_k$.
In every point in the algorithm, there exist matrices $C_k$ and $V_k$  such that
\begin{equation}
   F_k(\theta)=Y\exp_{N-1}(\theta S)C_k+ ((1,\theta,\ldots,\theta^{N-1})\otimes I)V_k,\label{eq:Fk}
\end{equation}
with an appropriate choice of $N$.

%
%

The variable $N$ defining the length of the polynomial part of the structure 
needs to be adapted during the iteration. 
This stems from the fact that functions  $\varphi_+$ and $\varphi$ in  
Theorem~\ref{thm:structaction} are represented with 
polynomial parts of different length ($N-1$ and $N$). 
Hence, we need to increase $N$ by one 
after each application of $\BBB$. 
Fortunately, the corresponding 
increase of $N$ can be easily achieved 
by treating the leading element of exponential part 
as an element of the polynomial part. Here, this means using
the fact that
\begin{multline}
   F_k(\theta)=Y\exp_{N-1}(\theta S)C_k+ ((1,\theta,\ldots,\theta^{N-1})\otimes I)V_k=\\
Y\exp_{N}(\theta S)C_k +((1,\theta,\ldots,\theta^{N})\otimes I)
\begin{pmatrix}
V_k\\ \frac{YS^NC_k}{N!}
\end{pmatrix}.
\end{multline}
In this work, the starting function $f$ will be an exponential function, and
after the first application of $\BBB$, we need to 
expand the polynomial part with one block consisting of 
$\frac{YS^NC_k}{N!}$ with $N=0$. 
Since $k=p_l+1$ at the first application of $\BBB$, 
for an iteration corresponding to a given $k$,
we  need to expand the 
polynomial part of $F_k$ with one block row consisting of $\frac{YS^NC_k}{N!}$ 
with $N=k-p_l-1$. 
%

\begin{algorithm}[H]
\caption[]{
Infinite Arnoldi method with structured functions and locked pair\\
\phantom{\bf Algorithm X:}$[V_k,C,H_k]=$\infarnoldiexp$(c,S,Y,p_l,\kmax)$
}
\label{alg:infarnoldi}
\begin{algorithmic}[1]
\INPUT  Number of iterations $\kmax$, coefficients 
$Y\in\CC^{n\times p}$, $S\in\CC^{p\times p}$, $c\in\CC^{p}$, 
representing the normalized function $f$ given by \eqref{eq:starting_f} and
the locked part of the factorization corresponding to the invariant 
pair $(\Psi,R)$ with $\Psi$ given by \eqref{eq:Psi_locked} and  $R\in\CC^{p_l\times p_l}$ is given from the structure of $S$ in \eqref{eq:Sstruct}.
The functions corresponding to the columns of $\Psi$ as well as the function $f$ must be orthogonal.
\OUTPUT $V_{\kmax+1}\in\CC^{(\kmax+1)n\times (\kmax+1)}$, $C_{\kmax+1}\in\CC^{p\times (\kmax+1)}$, $\underline{H}_{\kmax}\in\CC^{(\kmax+1)\times \kmax}$
representing the factorization \eqref{eq:arnfact3}
\vspace{0.2cm}
\STATE Set $H_{p_l,p_l}=R$
\STATE Set  $C_{p_l+1}=\begin{pmatrix}e_1 & \ldots &e_{p_l}& c\end{pmatrix}$
\STATE Set $V_{p_l+1}=$empty matrix of size $0\times (p_l+1)$
\FOR {$k=p_l+1,\ldots,\kmax$ }
\STATE Compute $c_+$ according to \eqref{eq:c+} where $c=c_k$, i.e.,
$k$th column of $C_k$.
\STATE Let $x\in\CC^{(k-p_l-1)n}$  be the $k$th column of $V_k$
\STATE Compute $x_{+,1},\ldots,x_{+,k-p_l-1}\in\CC^{n}$ according to \eqref{eq:X+} 
\STATE Compute $x_{+,0}$ according to \eqref{eq:y0} with $N=k-p_l-1$.
\STATE Expand $V_k$ with one block row: 
\[
\underline{V}_k=\begin{pmatrix}V_k\\\frac{YS^{k-p_l-1}C_k}{(k-p_l-1)!}\end{pmatrix}
\]
\STATE $[c_\orth,x_\orth,h_k,\beta]=$\gramschmidt$(c_+,x_+,C_k,\underline{V}_k)$
%
\STATE Let $\underline{H}_k = \left[\begin{array}{cc}\underline{H}_{k-1} & h_k \\ 0 & \beta \end{array}\right] \in\CC^{(k+1)\times k}$
\STATE Expand $C_k$ by setting  $C_{k+1}=(C_k,c_{\orth})$
\STATE Expand $V_k$ by setting  $V_{k+1} = (\underline{V}_k,x_{\perp})$
\ENDFOR 
\end{algorithmic}
\end{algorithm}

With the block structure representation \eqref{eq:Fk} we
can now specialize
Algorithm~\ref{alg:arnoldiBBB} for the structured functions. 
The finite-dimensional implementation of Algorithm~\ref{alg:arnoldiBBB} is
given in 
Algorithm~\ref{alg:infarnoldi} and
 visually illustrated in Figure~\ref{fig:infarnoldi}. 

The input and output of the algorithm should be interpreted as follows. 
The variables $Y$, $S$, $c$ specify the starting function $f$ as well as
the locked part of the factorization. The starting function is given by 
\begin{equation}
f(\theta)=
Y\exp(\theta S)c
\label{eq:starting_f}
\end{equation}
and the locked part of the factorization (in Algorithm~\ref{alg:arnoldiBBB} 
denoted $(\Psi,R)$) corresponds to 
\begin{equation}
  \Psi(\theta)=Y\exp(\theta S)\begin{pmatrix}I_{p_l}\\0\end{pmatrix}\label{eq:Psi_locked}
\end{equation}
where $R\in\CC^{p_l\times p_l}$ is defined as the inverse of the leading block of $S$. Recall 
that $S$ is assumed to have the block triangular structure \eqref{eq:Sstruct0},
i.e., 
\begin{equation}
   S=\begin{pmatrix}R^{-1}& S_{12}\\0 & S_{22} \end{pmatrix},\label{eq:Sstruct}
\end{equation}
The output is a finite-dimensional representation of the factorization 
\begin{equation}
     (\BBB F_{\kmax} )(\theta)=F_{\kmax+1}(\theta) 
\underline{H}_{\kmax},\label{eq:arnfact3}
\end{equation}
where the block function $F_{\kmax+1}$ is given 
\begin{equation}
 F_{\kmax+1}(\theta)=
Y\exp_{\kmax}(\theta S)C_{\kmax+1}+
((1,\theta,\cdots,\theta^{\kmax})\otimes I_n)V_{\kmax+1},\label{eq:manip:Fkp1}
\end{equation}
and $F_{\kmax}$ is the first $\kmax$ columns of $F_{\kmax+1}$.

%

%

\begin{figure}[hb]
\begin{center}
\includegraphics{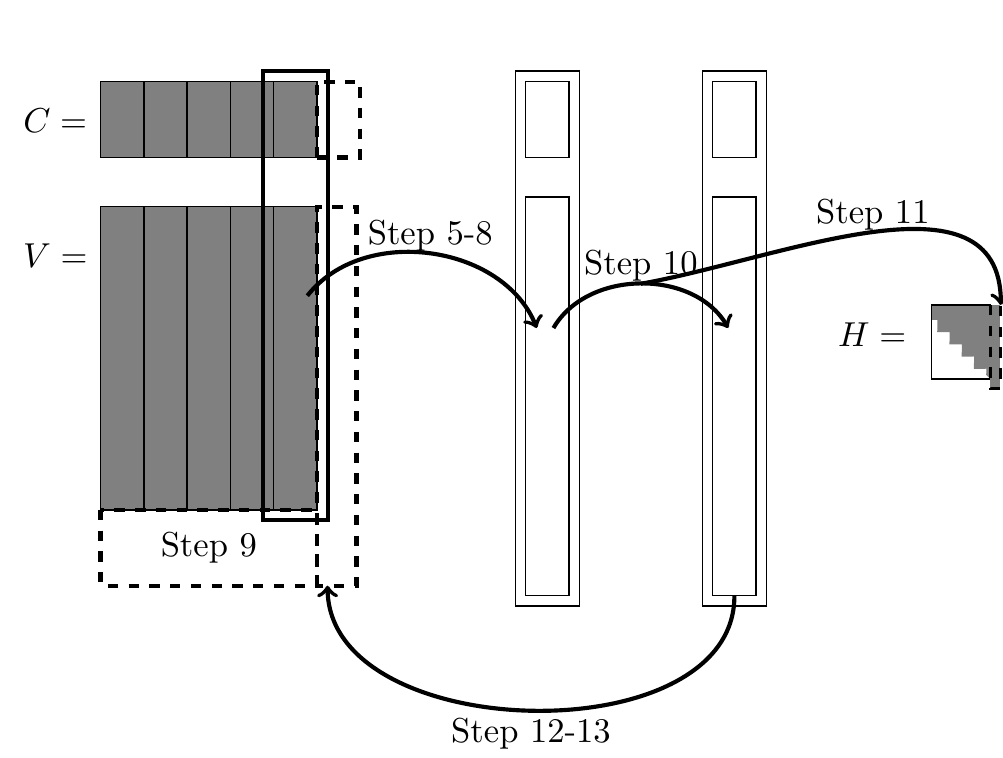}
\end{center}
\caption{Visualization of 
the infinite Arnoldi method with structured functions (Algorithm~\ref{alg:infarnoldi})} 
\label{fig:infarnoldi}
\end{figure}
\subsection{Gram-Schmidt orthogonalization}\label{sect:scalarprod}
%
%
\subsubsection{Computing the scalar product for structured functions}
%

For structured functions, i.e., functions of the form \eqref{eq:phistruct}, the consistent extension 
of the definition \eqref{eq:scalarprod_def0} is the following.
Let
\begin{equation}
  \varphi(\theta)= 
Y\exp_{N}(\theta S)c+
((1,\theta,\cdots,\theta^{N})\otimes I_n)x\label{eq:varphi_sp}
%
\end{equation}
and
\begin{equation}
  \psi(\theta)= 
Y\exp_{N}(\theta S)d+
((1,\theta,\cdots,\theta^{N})\otimes I_n)z.\label{eq:psi_sp}
\end{equation}
Then, 
\begin{equation}
 <\varphi,\psi>:=
\sum_{i=0}^Nz_i^Hx_i+\sum_{i=N+1}^{\infty}\frac{d^H(S^i)^HY^HY S^ic}{(i!)^2}
\label{eq:scalarprod_def}
\end{equation}
%
In practice, we can compute the scalar product by truncating the infinite sum 
and exploiting the structure of the sum.

\begin{lemma}[Computation of scalar product]
Suppose the two functions  
$\varphi:\CC\rightarrow\CC^n$ and $\psi:\CC\rightarrow\CC^n$ 
are given by
\eqref{eq:varphi_sp} and \eqref{eq:psi_sp}. Then
\begin{equation}
 <\varphi,\psi>=
\sum_{i=0}^Nz_i^Hx_i+
d^HW_{N+1,N_{\max}}c
+\veps_{N_{\max}},\label{eq:scalarprod_approx}
\end{equation}
with
\begin{equation}
W_{N,M}=\sum_{i=N}^{M}\frac{(S^i)^HY^HY S^i}{(i!)^2}
\label{eq:Wdef}
\end{equation}
provides an approximation to accuracy
\begin{equation}
  |\veps_{N_{\max}}|
\le
\|d\|_2\|Y^HY\|_2\|c\|_2
\frac{e^{2\|S\|_2}\|S\|_2^{2(N_{\max}+1)}}{((N_{\max}+1)!)^2}.
\label{eq:vepsapprox}
\end{equation}
\end{lemma}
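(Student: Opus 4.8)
The plan is to start from the exact formula for the scalar product of two structured functions, equation \eqref{eq:scalarprod_def}, and split the infinite tail $\sum_{i=N+1}^\infty \frac{d^H(S^i)^HY^HYS^ic}{(i!)^2}$ at the index $N_{\max}$. The finite part $\sum_{i=N+1}^{N_{\max}}$ is precisely $d^H W_{N+1,N_{\max}}c$ by the definition \eqref{eq:Wdef}, so the error $\veps_{N_{\max}}$ is exactly the remaining tail $\sum_{i=N_{\max}+1}^\infty \frac{d^H(S^i)^HY^HYS^ic}{(i!)^2}$. Everything then reduces to bounding the norm of this tail.

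Next I would bound each term. For a fixed $i$, write $\left|\frac{d^H(S^i)^HY^HYS^ic}{(i!)^2}\right| = \frac{|(S^id)^H\, Y^HY\, (S^ic)|}{(i!)^2}$ and apply Cauchy--Schwarz together with submultiplicativity of the spectral norm: this gives the bound $\frac{\|S\|_2^i\|d\|_2 \cdot \|Y^HY\|_2 \cdot \|S\|_2^i\|c\|_2}{(i!)^2} = \|d\|_2\|Y^HY\|_2\|c\|_2 \frac{\|S\|_2^{2i}}{(i!)^2}$. Summing over $i \ge N_{\max}+1$ and pulling out the constant factor $\|d\|_2\|Y^HY\|_2\|c\|_2$ leaves $\sum_{i=N_{\max}+1}^\infty \frac{\|S\|_2^{2i}}{(i!)^2}$ to estimate.

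The one genuinely arithmetic step is to show $\sum_{i=N_{\max}+1}^\infty \frac{t^{2i}}{(i!)^2} \le \frac{t^{2(N_{\max}+1)}}{((N_{\max}+1)!)^2} e^{2t}$ with $t=\|S\|_2$. I would do this by factoring out the leading term and comparing the ratio of consecutive tail terms: for $i = N_{\max}+1+j$ with $j\ge 0$, one has $\frac{t^{2i}/(i!)^2}{t^{2(N_{\max}+1)}/((N_{\max}+1)!)^2} = \left(\frac{t^{2j}((N_{\max}+1)!)^2}{((N_{\max}+1+j)!)^2}\right) \le \frac{t^{2j}}{(j!)^2}$, since $\frac{(N_{\max}+1)!}{(N_{\max}+1+j)!}\le \frac{1}{j!}$. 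Summing over $j\ge 0$ gives $\sum_j \frac{t^{2j}}{(j!)^2} \le \sum_j \frac{(2t)^j}{j!} = e^{2t}$ (using $2^j \ge \frac{2^j j!}{j!}$... more simply $\frac{t^{2j}}{(j!)^2}\le \frac{(2t)^j}{j!}$ would require $t^{2j} (j!) \le (2t)^j (j!)^2$, i.e. $t^j \le 2^j j!$, which may fail for large $t$; instead I would just bound $\sum_j \frac{t^{2j}}{(j!)^2} \le \sum_j \frac{(t^2)^j}{j!} = e^{t^2}$ when $t\ge 1$, or more robustly note $\sum_j \frac{t^{2j}}{(j!)^2}$ is the modified Bessel-type series bounded by $e^{2t}$ since $\frac{1}{(j!)^2}\le \frac{2^j}{(2j)!}$ gives $\sum_j \frac{(2t)^{2j}}{(2j)!}\cdot 2^{-j}\le \cosh(2t)\le e^{2t}$). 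The cleanest route uses $\frac{1}{(j!)^2} \le \frac{2^j}{(2j)!}$: then $\sum_{j\ge 0}\frac{t^{2j}}{(j!)^2} \le \sum_{j\ge 0}\frac{(2t^2)^j}{(2j)!}\le \sum_{k\ge 0}\frac{(2t)^k}{k!}$ is not immediate either, so in the writeup I would simply invoke $\sum_{j\ge0} t^{2j}/(j!)^2 = I_0(2t)\le e^{2t}$ for the modified Bessel function $I_0$, or bound termwise by $e^{2t}$ directly; this bookkeeping is the main fiddly obstacle, though it is entirely elementary.

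Putting the pieces together yields $|\veps_{N_{\max}}| \le \|d\|_2\|Y^HY\|_2\|c\|_2 \cdot \frac{\|S\|_2^{2(N_{\max}+1)}}{((N_{\max}+1)!)^2}\, e^{2\|S\|_2}$, which is exactly \eqref{eq:vepsapprox}; the identity \eqref{eq:scalarprod_approx} then follows since the displayed finite sums reconstruct $<\varphi,\psi>$ minus the tail $\veps_{N_{\max}}$. I expect no conceptual obstacle — the only care needed is in choosing the cleanest way to majorize the tail of $\sum t^{2i}/(i!)^2$ by a closed-form expression, and in making sure the constant and the power of $\|S\|_2$ match the statement exactly.
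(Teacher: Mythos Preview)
Your identification of $\veps_{N_{\max}}$ as the tail $\sum_{i\ge N_{\max}+1}\frac{d^H(S^i)^HY^HYS^ic}{(i!)^2}$ and the termwise bound giving $\|d\|_2\|Y^HY\|_2\|c\|_2\sum_{i\ge N_{\max}+1}\frac{\|S\|_2^{2i}}{(i!)^2}$ are exactly what the paper does. Where you diverge is in estimating that last series: you factor out the leading term, reduce to $\sum_{j\ge 0} t^{2j}/(j!)^2$, and then scramble through several candidate bounds before invoking $I_0(2t)\le e^{2t}$. The paper's route is much cleaner and avoids all of this: it uses the elementary inequality $\sum_i a_i^2 \le \bigl(\sum_i a_i\bigr)^2$ for nonnegative $a_i$ directly on the tail, obtaining
\[
\sum_{i=N_{\max}+1}^{\infty}\frac{t^{2i}}{(i!)^2}\ \le\ \Bigl(\sum_{i=N_{\max}+1}^{\infty}\frac{t^{i}}{i!}\Bigr)^{2},
\]
and then recognises the right-hand sum as the Lagrange remainder of the exponential, bounded by $e^{t}\,t^{N_{\max}+1}/(N_{\max}+1)!$. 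Squaring gives \eqref{eq:vepsapprox} immediately. Incidentally, the same one-line inequality with $a_j=t^j/j!$ is precisely what proves your $I_0(2t)\le e^{2t}$, so the Bessel detour is unnecessary.
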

\begin{proof}
By comparing the infinite sum \eqref{eq:scalarprod_def} 
with \eqref{eq:scalarprod_approx},
we can solve for $\veps_{N_{\max}}$ and bound the modulus,
\[
|\veps_{N_{\max}}|\le \|d\|_2\|Y^HY\|_2\|c\|_2 \sum_{i=N_{\max}+1}^{\infty}\frac{\|S\|_2^{2i}}{(i!)^2}
\le 
\|d\|_2\|Y^HY\|_2\|c\|_2
\left(\sum_{i=N_{\max}+1}^{\infty}\frac{\|S\|_2^{i}}{i!}\right)^2.
\]
The sum in the right-hand side can be interpreted as  the remainder term in the
Taylor approximation of $\exp(\|S\|_2)$.
The bound \eqref{eq:vepsapprox} follows by
applying Taylor's theorem.
\end{proof}

The lemma above has some properties important from a computational perspective:
\begin{itemize}
\item The sum in \eqref{eq:Wdef} involves only matrices of
size $p\times p$, i.e., it does not involve very large matrices, under the condition that  $Y^HY$ is precomputed.
\item The matrix $W_{N,M}$ defined by \eqref{eq:Wdef}
is constant if $S$ and $Y$ are constant. Hence, 
in combination with Algorithm~\ref{alg:infarnoldi} it only needs to be computed once in order to construct the Arnoldi factorization. 
\item An appropriate value of $N_{\max}$ such that 
$\varepsilon_{N_{\max}}$ is smaller than or comparable to
machine precision can be computed from $\|S\|$ by 
increasing $N_{\max}$ until the right-hand side of \eqref{eq:vepsapprox} 
is sufficiently small.
\end{itemize}

\subsubsection{Computing the Gram-Schmidt orthogonalization for structured functions}
%
%
One step of the Gram-Schmidt orthogonalization process
can be seen as a way of computing the \emph{orthogonal complement},
followed by normalizing the result.
When working with matrices, the process is compactly 
expressed as follows. 
Consider an orthogonal matrix $X\in\CC^{n\times k}$. The 
\emph{orthogonal complement} of a vector $u\in\CC$, with respect 
to the space spanned by the columns of $X$
and the Euclidean scalar product is given by,
\begin{equation}
  u_\orth=u-Vh.\label{eq:uorth}
\end{equation}
where 
\begin{equation}
  h=V^Hu.\label{eq:horth}
\end{equation}
In the setting of Arnoldi's method, the orthogonalization 
coefficients $h$ and the norm of the orthogonal complement
 $\beta$ needs to be returned to the Arnoldi algorithm.

\begin{algorithm}[h]
\caption[]{%
Gram-Schmidt orthogonalization for the scalar product \eqref{eq:scalarprod_def}\\
\phantom{\bf Algorithm 1:\;}$[c_\orth,x_\orth,h,\beta]=$\gramschmidt$(c,x,C,V)$
}\label{alg:taylorgm}
\begin{algorithmic}[1]
\INPUT  Vectors $c\in\CC^n$, $x\in\CC^{(N+1)n}$ representing the function
\[
\varphi(\theta):=Y\exp_{N}(\theta S)c+
((1,\theta,\cdots,\theta^{N})\otimes I_n)x
\] 
and $C\in\CC^{n\times k}$, $V\in\CC^{(N+1)n\times k}$, representing 
the block function, $F:\CC\rightarrow\CC^{n\times k}$, 
\[
 F(\theta)=
Y\exp_{N}(\theta S)C+
((1,\theta,\cdots,\theta^{N})\otimes I_n)V,
\] 
whose columns are orthogonal with respect to $<\cdot,\cdot>$ defined by \eqref{eq:scalarprod_def}.
\OUTPUT Orthogonalization coefficients $h\in\CC^{k}$,$\beta\in\CC$ and 
vectors $c_{\orth}\in\CC^{pn}$ and $x_{\orth}\in\CC^{(k+1)n}$ representing the normalized orthogonal complement of $\varphi$, 
\[
\varphi_\orth(\theta):=Y\exp_{N}(\theta S)c_\orth+
((1,\theta,\cdots,\theta^{N})\otimes I_n)x_\orth.
\]
\vspace{0.2cm}
\STATE
$h=V^Hx+C^H(W_{N+1,N_{\max}}c)$,
where $W_{N+1,N_{\max}}$ is given by \eqref{eq:Wdef}
\STATE
$c_{\orth}=c-Ch$
\STATE $x_{\orth}=x-Vh$
\STATE
$
g=V^Hx_\orth+C^H(W_{N+1,N_{\max}}c\orth)
$
\IF{ $\|g\|>$REORTH\_TOL}
\STATE
$c_{\orth}=c_\orth-Cg$
\STATE $x_{\orth}=x_\orth-Vg$
\STATE $h= h+g$
\ENDIF
\STATE
$\beta=x_\orth^Hx_\orth+c_\orth^H (W_{N+1,N_{\max}}c_\orth)$
\STATE $c_{\orth}=c_{\orth}/\beta$
\STATE $x_{\orth}=x_{\orth}/\beta$
\end{algorithmic}
\end{algorithm}

Due to the fact that the considered scalar product \eqref{eq:scalarprod_def} 
is the Euclidean scalar product on the Taylor coefficients, 
we can, similar to \eqref{eq:uorth} and \eqref{eq:horth}, compute
the orthogonal complement using matrices. 
The corresponding operations for our setting  are presented in the following theorem.
\begin{theorem}[Orthogonal complement]
Let 
$Y\in\CC^{n\times p}$,
$S\in\CC^{p\times p}$,
$C\in\CC^{p\times k}$,
$V\in\CC^{n(N+1)\times k}$ be the matrices representing the block function $F:\CC\rightarrow\CC^{n\times k}$, 
\[
  F(\theta)= Y\exp_{N}(\theta S)C+((1,\theta,\cdots,\theta^{N})\otimes I_n)V
\]
where the columns  are orthonormal with respect to $<\cdot,\cdot>$ defined by \eqref{eq:scalarprod_def}.  
Consider the function $\varphi$, represented by  $c_+\in\CC^p$ and $x_+\in\CC^{n(N+1)}$ and 
 defined by 
\[
\varphi(\theta)=Y\exp_{N}(\theta S)c_++
((1,\theta,\cdots,\theta^{N})\otimes I_n)x_+.
\]
and let $h\in\CC^k$, 
\[
  h:=V^Hx_++C^H\left(\sum_{i=N+1}^{\infty}\frac{(S^i)^HY^HYS}{(i!)^2}\right) c_+
\]
Then, the function $\varphi_\orth$, represented by the vectors
\[
c_\orth=c_+-Ch\in\CC^k,\;\; x_\orth=x_+-Vh\in\CC^{n(N+1)},
\]
and defined by 
\[
\varphi_\orth(\theta):=
Y\exp_{N}(\theta S)c_\orth+
((1,\theta,\cdots,\theta^{N})\otimes I_n)x_\orth
\]
is the orthogonal complement of $\varphi$ 
with respect to the space span by the the columns of $F$ and the scalar product $<\cdot,\cdot>$ defined by \eqref{eq:scalarprod_def}.
\end{theorem}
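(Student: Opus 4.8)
The plan is to reduce the statement to the fact---already invoked in Section~\ref{sect:algorithmic}---that the scalar product \eqref{eq:scalarprod_def} is, by construction, the ordinary Euclidean inner product on the (infinite) vector of Taylor coefficients. Once that identification is made explicit, the theorem becomes the classical one-line fact that subtracting $Vh$ with $h=V^H u$ produces the orthogonal complement of $u$ against an orthonormal family $V$; the only real content is checking that the finite-dimensional formulas given in terms of $Y,S,C,V$ genuinely compute the correct infinite-dimensional inner products.

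First I would write each of the functions $F(\theta)$ (columnwise) and $\varphi(\theta)$ in the fully expanded Taylor form, i.e.\ identify $\varphi$ with its coefficient sequence $(x_{+,0},\dots,x_{+,N},\tfrac{1}{(N+1)!}YS^{N+1}c_+,\tfrac{1}{(N+2)!}YS^{N+2}c_+,\dots)$ and similarly for each column of $F$. I would then observe that, by the very definition \eqref{eq:scalarprod_def}, the $j$th component of $h$, namely $<\varphi,F_j>$, equals $\sum_{i=0}^N (V_j)_i^H (x_+)_i$ plus the tail $\sum_{i=N+1}^\infty C_j^H (S^i)^H Y^H Y S^i c_+/(i!)^2$, which is exactly $V_j^H x_+ + C_j^H\bigl(\sum_{i=N+1}^\infty (S^i)^H Y^H Y S^i/(i!)^2\bigr)c_+$; collecting over $j$ gives the stated formula for $h$. (I note the theorem as typeset writes $(S^i)^HY^HYS$ in the tail, dropping a superscript $i$ and an overall $1/(i!)^2$ inside the sum; the intended quantity is $W_{N+1,\infty}$ in the notation \eqref{eq:Wdef}, and I would state it that way to keep things clean.)

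Next I would verify that $\varphi_\orth$ is indeed orthogonal to every column of $F$ and that $\varphi_\orth$ still has the claimed structured form. Orthogonality is immediate from the Euclidean interpretation: since the columns of $F$ are orthonormal, $<\varphi_\orth,F_j> = <\varphi,F_j> - \sum_\ell h_\ell <F_\ell,F_j> = h_j - h_j = 0$. Structural closure is equally immediate because the set of functions of the form $Y\exp_N(\theta S)c + ((1,\theta,\dots,\theta^N)\otimes I_n)x$ is a vector space in $(c,x)$ with $Y,S$ fixed, so the linear combination $\varphi - Fh$ again has that form with data $c_\orth = c_+ - Ch$ and $x_\orth = x_+ - Vh$. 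Finally I would remark that this is precisely the content needed to justify lines~1--3 of Algorithm~\ref{alg:taylorgm}, with the reorthogonalization step (lines~4--9) being a second application of the same identity and the normalization (lines~10--12) dividing by $\beta = \sqrt{<\varphi_\orth,\varphi_\orth>}$ computed by the same formula with $\varphi_\orth$ in both slots.

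The main obstacle is essentially bookkeeping rather than depth: one must be careful that the two functions being compared share the \emph{same} truncation index $N$ and the same $Y,S$, so that their coefficient sequences agree term-by-term in the exponential tail and the inner-product sum splits as in \eqref{eq:scalarprod_def}; if the truncation lengths differed one would first have to re-expand one of them using the identity $Y\exp_{N-1}(\theta S)c = Y\exp_N(\theta S)c + \theta^N (YS^Nc)/N!$ as in Section~\ref{sect:algorithmic}. A secondary point worth a sentence is convergence of the tail series $\sum_{i\ge N+1} (S^i)^HY^HYS^i c_+/(i!)^2$, which converges absolutely for every $S$ (bounded by $\|Y^HY\|_2\|c_+\|_2 e^{2\|S\|_2}$, exactly as in the preceding lemma), so all the manipulations are legitimate.
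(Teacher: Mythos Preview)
Your proposal is correct and follows essentially the same approach as the paper's proof, which is a two-sentence sketch: ``The construction is such that $\varphi_\orth$ is a linear combination of $\varphi$ and the columns of $F$ (due to linearity in coefficients $c$ and $x$). Remains to check that $\varphi_\orth$ is orthogonal to columns of $F$.'' You have simply filled in the details the paper omits---verifying that $h_j=\langle\varphi,F_j\rangle$ via the Taylor-coefficient interpretation of \eqref{eq:scalarprod_def}, and then using orthonormality of the columns of $F$---and you correctly flag the typo in the tail term (the intended object is $W_{N+1,\infty}$ in the notation of \eqref{eq:Wdef}).
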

\begin{proof}
The construction is such that $\varphi_\orth$ is a linear combination of $\varphi$ and the columns of $F$ (due to linearity in coefficients $c$ and $x$). Remains to check that $\varphi_\orth$ is orthogonal to columns of $F$.
\end{proof}

The Gram-Schmidt process with reorthogonalization
can hence be efficiently implemented with operations on matrices and vectors. 
This is presented in 
Algorithm~\ref{alg:taylorgm}, where we used iterative
reorthogonalization  \cite{Bjorck:1994:GM} with (as usual) at most two steps.
In the numerical simulations we used REORTH\_TOL$=\sqrt{\varepsilon_{\rm mach}}$.

\section{Extracting and restarting}\label{sect:restart}
Recall the general outline described in Section~\ref{sect:linearprobs} 
and that 
we have now (in the Section~\ref{sect:lockedinfarn}) described the first  step
in detail.
In what follows we discuss the second step. 
We propose a procedure to carry out some operations of the result
of the first step, i.e., Algorithm~\ref{alg:infarnoldi}, and restart
it such that we expect that the outer iteration eventually
converges to a partial Schur factorization.

%
%
%

%
%
\subsection{Manipulations of the Arnoldi factorization}\label{sect:manips}
First recall that Algorithm~\ref{alg:infarnoldi} 
is an Arnoldi method in a function setting and the
output corresponds to an Arnoldi factorization, 
\begin{equation}
     (\BBB F_k )(\theta)=F_{k+1}(\theta) 
\underline{H}_k,\label{eq:arnfact2}
\end{equation}
where, the block function $F_{k+1}$ is given 
by the output of Algorithm~\ref{alg:infarnoldi} with the
defintion
\begin{equation}
 F_{k+1}(\theta)=
Y\exp_{k}(\theta S)C_{k+1}+
((1,\theta,\cdots,\theta^{k})\otimes I_n)V_{k+1},\label{eq:manip:Fkp1}
\end{equation}
and $F_{k}$ is the first $k$ columns of
$F_{k+1}$. To ease the notation, we have denoted $k=\kmax$. 

Although the Arnoldi factorization \eqref{eq:arnfact2} 
is a function relation, we will now see that
several parts of the steps
for implicit restarting (cf. 
\cite{Morgan:1996:RESTARTING,Lehoucq:2001:IRAM,Lehoucq:1996:DEFLATION,Stewart:2001:KRYLOVSCHUR}) for Arnoldi's method (for linear matrix 
eigenvalue problems)
can be carried out in a similar way by  working with functions.

We will start by computing an ordered Schur factorization of ${H}_k$
\begin{equation}
    Q^*H_kQ=
    (Q_1,Q_2,Q_3)^*{H}_k(Q_1,Q_2,Q_3)=
\begin{pmatrix}
R_{11} & R_{12} & R_{13}\\
       & R_{22} & R_{23}\\
       &        & R_{33}\\
\end{pmatrix}\label{eq:orderedschur}
\end{equation}
where $R_{11}\in\CC^{p_l\times p_l}$,
$R_{22}\in\CC^{(p-p_l)\times(p-p_l)}$ and
$R_{33}\in\CC^{(k-p)\times(k-p)}$ are  upper triangular
matrices. 
The ordering is such that 
the eigenvalues of $R_{11}$ are 
very accurate (and from now on called the locked Ritz values), 
the eigenvalues of $R_{22}$ are
wanted eigenvalues (selected according to some criteria)
 which have not converged,
and the eigenvalues of $R_{33}$ are unwanted.

Hence,
\begin{equation}
\begin{pmatrix}
Q^* & \\
   & 1
\end{pmatrix}
\underline{H}_k(Q_1,Q_2,Q_3)=
\begin{pmatrix}
R_{11} & R_{12} & R_{13}\\
       & R_{22} & R_{23}\\
       &        & R_{33}\\
a_1^T  & a_2^T  & a_3^T
\end{pmatrix}.\label{eq:orderedschur2}
\end{equation}
Note that $a_1$ is a measure of the (unstructured) backward error
of the corresponding eigenvalues of $R_{11}$ and $\|a_1\|$ is often
used as stopping criteria. Hence,
$\|a_1\|$ will be zero if the eigenvalues of $R_{11}$ are exact and 
will in general be small (or very small) relative to 
$\underline{H}_k$  
since the eigenvalues of $R_{11}$ are very accurate solutions.
By successive application of Householder reflections (see e.g. \cite{Meerbergen:2008:QUADARNOLDI}) 
we can now construct an 
 orthogonal matrix $P_2$ such that 
\begin{equation}
\begin{pmatrix}
I_{p_l} && \\ & P_2^* & \\ & &1
\end{pmatrix}
\begin{pmatrix}
R_{11} & R_{12} & \\
       & R_{22} & \\
a_1^T  & a_2^T  &
\end{pmatrix}
\begin{pmatrix}
I_{p_l} & \\ & P_2  
\end{pmatrix}=
\begin{pmatrix}
R_{11} & M \\
0      & \hat{H} \\
a_1^T    & e_{p-p_l}^T \beta
\end{pmatrix},\label{eq:backhessenberg}
\end{equation}
where 
\[
  \underline{\hat{H}}:=
\begin{pmatrix}
\hat{H}\\
e_{p-p_l}^T\beta
\end{pmatrix}
\]
is a Hessenberg matrix.

By considering the leading two blocks and columns of 
\eqref{eq:orderedschur2} and the result of the Householder reflection
transformation \eqref{eq:backhessenberg} we find that 
\begin{equation}
\begin{pmatrix}
(Q_1,Q_2P_2)^* & \\
& 1
\end{pmatrix}
\underline{H}_k(Q_1,Q_2P_2)=
\begin{pmatrix}
R_{11} & Z \\
0      & \hat{H} \\
a_1^T    & e_{p-p_l}^T \beta
\end{pmatrix}=
\begin{pmatrix}
R_{11} & Z \\
0      & \underline{\hat{H}} 
\end{pmatrix}+O(\|a_1\|).\label{eq:backhessenberg2}
\end{equation}
These operations yield a transformation of the Arnoldi factorization
where the first block is triangular (to order $O(\|a_1\|)$). 
%
We reach the following result, which is an
Arnoldi factorization similar to \eqref{eq:arnfact2}
but only of length $p$. Moreover, the Hessenberg matrix  
does not contain the unwanted eigenvalues and has
a leading block which is almost triangular. 
\begin{theorem}\label{thm:arnfactp}
Consider an Arnoldi factorization given by \eqref{eq:arnfact2}
and let $F_{k+1}(\theta)=(F_k(\theta),f(\theta))$. 
Let $Q_1$ and $Q_2$ represent the leading blocks in the ordered Schur decomposition 
\eqref{eq:orderedschur} and let $P_2$, $R_{11}$ and $\underline{\hat{H}}$
be the result of the Householder reflections in \eqref{eq:backhessenberg2}. Moreover, 
let
\begin{equation}
   G_{p}(\theta):=F_k(\theta)(Q_1,Q_2P_2),\;\; G_{p+1}(\theta):=(G_p(\theta),f(\theta)).\label{eq:Gpdef}
\end{equation} 
Then, $G_{p+1}$ approximately satisfies the length $p<k$  Arnoldi factorization 
\begin{equation}
  (\BBB G_p)(\theta)=G_{p+1}(\theta)
\begin{pmatrix}
R_{11} & Z \\
0      & \underline{\hat{H}} 
\end{pmatrix}+O(\|a_1\|).\label{eq:arnfactp}
\end{equation}
\end{theorem}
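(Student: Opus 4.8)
The plan is to start from the given Arnoldi factorization \eqref{eq:arnfact2}, restrict it to the first $k$ columns, and then apply the two orthogonal change-of-basis operations (the ordered Schur transformation and the subsequent Householder reductions) that have already been assembled in \eqref{eq:orderedschur2}--\eqref{eq:backhessenberg2}. The key observation is that \eqref{eq:arnfact2} is linear in the columns of $F_k$, so post-multiplying $F_k$ by any matrix transforms the factorization in a completely mechanical way; the nontrivial content is purely bookkeeping about which blocks survive and where the $O(\|a_1\|)$ term enters.

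First I would write out $(\BBB F_k)(\theta) = F_{k+1}(\theta)\underline{H}_k$ and right-multiply both sides by $(Q_1,Q_2,Q_3)$, using that $\BBB$ acts columnwise (as defined in Section~\ref{sect:reformulation}) so that $\BBB(F_k (Q_1,Q_2,Q_3)) = (\BBB F_k)(Q_1,Q_2,Q_3)$. This gives $(\BBB (F_k Q))(\theta) = F_{k+1}(\theta) \underline{H}_k Q$. Inserting $\mathrm{diag}(Q^*,1)\,\mathrm{diag}(Q,1) = I$ on the left of $F_{k+1}$ and using \eqref{eq:orderedschur2}, the right-hand side becomes $(F_k Q, f)(\theta)$ times the block matrix with rows $R_{11},R_{22},R_{33}$ and bottom row $a_1^T,a_2^T,a_3^T$. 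Next I would discard the third block of columns (the unwanted Ritz directions associated with $R_{33}$): keeping only the $Q_1,Q_2$ columns of $F_k Q$ on the left, the equation for those columns reads $(\BBB(F_k(Q_1,Q_2)))(\theta) = (F_k(Q_1,Q_2),f)(\theta)$ times the matrix with rows $R_{11},R_{12}$ / $R_{22}$ / bottom row $a_1^T,a_2^T$ — this is exactly the length-$p$ block whose Householder reduction is recorded in \eqref{eq:backhessenberg}. Finally, post-multiplying this truncated relation by $\mathrm{diag}(I_{p_l},P_2)$ and inserting $\mathrm{diag}(I_{p_l},P_2^*,1)\,\mathrm{diag}(I_{p_l},P_2,1)=I$ in front of the basis, and using \eqref{eq:backhessenberg2}, yields $(\BBB G_p)(\theta) = G_{p+1}(\theta)\bigl(\begin{smallmatrix} R_{11} & Z \\ 0 & \underline{\hat H}\end{smallmatrix}\bigr) + O(\|a_1\|)$, which is \eqref{eq:arnfactp}, with $G_p$ and $G_{p+1}$ as in \eqref{eq:Gpdef}.

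The only place where the $O(\|a_1\|)$ appears is the last bottom-row entry $a_1^T$ in \eqref{eq:backhessenberg2}: dropping it is precisely the approximation, and since everything multiplying it (the columns of $G_{p+1}$, which are unit-norm in the scalar product, and the orthogonal factors $Q_1,Q_2,P_2$) is bounded independently of $\|a_1\|$, the discarded term is genuinely $O(\|a_1\|)$ in the relevant norm. I would make this quantitative by noting $\|$(discarded contribution)$\|\le \|G_{p+1}\|\,\|a_1\|$ with $\|G_{p+1}\|$ controlled by orthonormality of its columns.

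The main obstacle — really the only subtlety — is justifying that $\BBB$ commutes with right-multiplication by a constant matrix, i.e.\ that $\BBB$ applied to a block function is the columnwise action and is linear over $\CC$; this is immediate from the block-action convention and the linearity of integration and of $C(\cdot)$ in Definition~\ref{def:B}, but it must be invoked explicitly because $\BBB$ is an infinite-dimensional operator rather than a matrix. Everything else is routine matrix algebra: tracking the block partition through two orthogonal similarity-type transformations and one column deletion.
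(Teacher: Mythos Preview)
Your proposal is correct and follows essentially the same approach as the paper: the paper does not give a separate formal proof of this theorem but instead derives it in the text of Section~\ref{sect:manips} immediately preceding the statement, via exactly the sequence you describe---right-multiply the Arnoldi factorization \eqref{eq:arnfact2} by the ordered Schur basis \eqref{eq:orderedschur2}, truncate to the first $p$ columns, then apply the Householder transformation \eqref{eq:backhessenberg}--\eqref{eq:backhessenberg2}, with the $O(\|a_1\|)$ arising from dropping the $a_1^T$ entry in \eqref{eq:backhessenberg2}. Your explicit remark that $\BBB$ commutes with right-multiplication by a constant matrix (by linearity and the columnwise block convention) is a detail the paper leaves implicit.
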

\subsection{Extraction and imposing structure}
%
%
Restarting in standard IRAM for matrices essentially
consists of 
assigning the algorithmic state of the Arnoldi method to that 
corresponding to the factorization in Theorem~\ref{thm:arnfactp}. 
The direct adaption of this procedure is not suitable  in our
setting 
due to a growth of the polynomial part of the structured functions. This can be seen as follows.
Suppose we start Algorithm~\ref{alg:arnoldiBBB}  with
a constant function (as done in \cite{Jarlebring:2010:TRINFARNOLDI}) 
and carry out the construction of $G_p$ as in Theorem~\ref{thm:arnfactp}. 
Then, $G_{p+1}$ will be a matrix with polynomials of degree $k$. 
We hence need to start with a state consisting of polynomials of degree $k$. 
The degree of the polynomial will grow with each restart
and after $M$ restarts, 
the polynomials will be  of degree $Mk$. The representation 
of this polynomial 
will hence quickly limit the efficiency of the restarting scheme.

Instead of restarting with polynomials we will
perform an explicit restart using Algorithm~\ref{alg:infarnoldi}
with a particular choice of the input which we here denote
$\hat{Y}$, $\hat{S}$, $\hat{c}$.
This choice is 
\emph{inspired} by the factorization in Theorem~\ref{thm:arnfactp}.

We will first impose exponential structure on $G_p$ 
in the sense that we consider a function $\hat{G}_p$, with the property
\[
  G_p(0)=\hat{G}_p(0)
\]
and defined by
\begin{equation}
\hat{G}_p(\theta):=G_p(0)\exp(\hat{S}\theta) \label{eq:impstruct}
\end{equation}
where
\begin{equation}
\hat{S}=
\begin{pmatrix}
R_{11} & Z \\
0 & \hat{H}
\end{pmatrix}^{-1}.\label{eq:setS}
\end{equation}
Note that we can express $G_p(0)$ explicitly from \eqref{eq:Gpdef} as
\begin{equation}
   G_p(0)=(V_{k+1,1}Q_1,\; V_{k+1,1}Q_2P_2)=:\hat{Y}.\label{eq:setY}
\end{equation}
where $V_{k+1,1}$ is the upper $n\times (k+1)$-block of $V_k$.

Assume for the moment that $\|a_1\|=0$. Then,
the first $p_l$ columns of \eqref{eq:arnfactp} 
correspond to the definition of an invariant pair $(\Psi,R)$, where 
$\Psi(\theta)=G_{p_l}(\theta)$ and $R=R_{11}$. From
Theorem~\ref{thm:invpairs} we know that $\Psi$ is of exponential structure, 
and imposing the structure as in \eqref{eq:impstruct} does
not modify the function, i.e., if $\|a_1\|=0$, then $\hat{G}_{p_l}(\theta)=G_{p_l}(\theta)$.
 Hence, 
the first $p_l$ columns of the equation \eqref{eq:arnfactp} are 
preserved also if we replace $G_p(\theta)$ with 
$\hat{G}_p(\theta)$. Due to the fact that $\|a_1\|$ is 
small (or very small) we expect that imposing the structure as 
in \eqref{eq:impstruct} gives an approximation of the $p_l$ columns
of \eqref{eq:arnfactp}, i.e., 
\begin{equation}
(\BBB \hat{G}_{p_l})(\theta)\approx
\hat{G}_{p_l+1}(\theta)
\begin{pmatrix}
R_{11}\\
0
\end{pmatrix},\label{eq:lockedfact}
\end{equation}
if $\|a_1\|$ is small and equality is achieved if $\|a_1\|=0$.

%

\begin{algorithm}[h]
\caption[]{Structured explicit restarting with locking\\
\phantom{\bf Algorithm 1:}$[S,Y]=$\tt{infarn\_restart}$(x_0,\lambda_0,\kmax,p)$
}\label{alg:exprestart}
\begin{algorithmic}[1]
\INPUT $x_0\in\CC^n$, $\lambda_0$ representing the function
\[
  f(\theta)=\exp(\lambda_0\theta)x_0,
\]
  maximum size of subspace $\kmax$, number of wanted eigenvalues $p$ 
\OUTPUT $S,Y$ such that $(Y,S)$ represents  an invariant pair
\vspace{0.2cm} 
\STATE Normalize $f$ by setting $x_0=\frac{1}{\|x_0\|\sqrt{W_{0,N_{\max}}}}x_0$, with 
$W_{0,N_{\max}}$ is given by \eqref{eq:Wdef} with $S=\lambda_0$
\STATE Set $Y_0=(x_0,0,\ldots,0)\in\CC^{n\times p}$.
\STATE Set $S=\diag(\lambda_0,1,\ldots,1)\in\CC^{p\times p}$
\STATE Set $c=e_1\in\CC^{p}$, $p_l=0$
\WHILE{$p_l<p$}
\STATE $[V,C,\underline{H}_{\kmax}]=$\infarnoldiexp$(c,S_j,Y_j,p_l,\kmax)$
\STATE For  every eigenvalue of $H_{\kmax}$ classify it as, lock, wanted or unwanted, and let $p_l$ denote the number of locked eigenvalues.
\STATE Compute ordered Schur factorization of $H_{\kmax}$ partitioned according to \eqref{eq:orderedschur}
\STATE Compute the $a_2$ vector in \eqref{eq:orderedschur2}
\STATE Compute the orthogonal matrix $P_2$ according to \eqref{eq:backhessenberg}
\STATE Compute $Z$ and $\hat{H}$ from \eqref{eq:backhessenberg2}
\STATE Set $Y_{j+1}=\hat{Y}$ and $S_{j+1}=\hat{S}$ according to \eqref{eq:setY} and \eqref{eq:setS} 
\STATE Reorthogonalize the function $F(\theta)=Y_{j+1}\exp(\theta S_{j+1})(e_1,\ldots,e_{p_l})$
\STATE $[c,\cdot,\cdot,\cdot]=$\gramschmidt$(e_{p_l+1},\cdot,C_{j+1},\cdot)$
\STATE Set $j=j+1$
\ENDWHILE
\end{algorithmic}
\end{algorithm}

With the above reasoning we have a justification to use the first $p_l$ 
columns of \eqref{eq:impstruct}, i.e., 
\[
  \hat{Y}\exp(\hat{S}\theta)\begin{pmatrix}I_{p_l}\\0\end{pmatrix}
\]
in the initial state for the restart. In the approximation
of the $p-p_l$ last columns of $G_p$ by the $p-p_l$ last columns 
of \eqref{eq:impstruct}, the Arnoldi relation in the function setting is
in general lost, because Ritz functions only have
exponential structure upon convergence. Therefore, we will only
use the 
$(p_l+1)$st column in the restart, from
which the Krylov  space will be extended again in the next inner iteration
This leads us to a restart with the function
\[
   \hat{Y}\exp(\hat{S}\theta)\begin{pmatrix}
I_{p_l+1}\\ 0
\end{pmatrix},
\]
which corresponds to setting $\hat{c}=e_{p_l+1}$ and initial function
\[
  f(\theta)=\hat{Y}\exp(\hat{S}\theta)e_{p_l+1}.
\]

By these modifications of the factorization \eqref{eq:arnfactp} we 
have now reached a choice of $\hat{Y}$ given by 
\eqref{eq:setY}, $\hat{S}$ given by \eqref{eq:setS} 
and $\hat{c}=e_{p_l+1}$. This choice of variables
satisfy all the properties necessary for the 
input of Algorithm~\ref{alg:infarnoldi}, except the orthogonality condition. 
The first columns of $\hat{G}_{p_l}$ are automatically 
orthogonal (at least if $\|a_1\|=0$). 
The $(p_l+1)$st column will however in general
 not be orthogonal to $\hat{G}_{p_l}$, which is an assumption needed 
for Algorithm~\ref{alg:infarnoldi}.
It is fortunately here
easily 
remedied by orthogonalizing the function corresponding to $\hat{c}=e_{p_l+1}$
using the function \gramschmidt, i.e., Algorithm~\ref{alg:taylorgm}.

The details of this selection as well as the manipulations in 
Section~\ref{sect:manips} are summarized in the outer iteration 
Algorithm~\ref{alg:exprestart}.

\begin{remark}[Explicit restart without locking]
Note that a restart which is theoretically
very similar to what we have here proposed, can be achieved by
starting the infinite Arnoldi method with the function of
the first column of \eqref{eq:impstruct}, without taking the ``locked part'' of
the factorization directly into account. 
Such an explicit restarting technique (without locking) 
does unfortunately have unfavorable numerical properties and will not 
be persued here.
%
From reasoning similar to \cite{Morgan:1996:RESTARTING} we know
that the first column of \eqref{eq:impstruct} is an approximation
of an element of an invariant subspace and the first $p_l$ 
steps of Arnoldi's method started with this vector is expected to recompute 
the $p_l$ converged Ritz vectors after $p_l$ iterations.
In the $(p_l+1)$st iteration, the 
Arnoldi vector is corrupted due to cancellation.
\end{remark}

\section{Examples}\label{sect:examples}

\subsection{A small example of Hadeler}\label{sect:hadeler}
The nonlinear eigenvalue problem presented in \cite{Hadeler:1967:MEHRPARAMETERIGE}, 
which is available with the name {\tt hadeler} in
the problem collection \cite{Betcke:2010:NLEVPCOLL}, is given by
\[
   M(\lambda)=-A_0+(\lambda+\mu)^2A_1+(e^{\lambda+\mu}-1)A_2,
\]
where $A_i\in\RR^{n\times n}$, $i=0,\ldots,2$ with $n=8$ and $\mu$ is
a shift which we will use to select a point close to which we will
find the eigenvalues.

In order to apply Algorithm~\ref{alg:infarnoldi} we need to derive 
a formula for $x_{+,0}$ in \eqref{eq:y0}. 
The derivatives for $M$ are straightforward to compute and we compute  
$\MM_N$, using
\eqref{eq:MMNdef} and
\eqref{eq:MMNdef2}. More precisely, we use the following computational expressions, 
\begin{eqnarray*}
\MM_{-1}(Y,S)c_+&=&
-A_0Yc_++A_1Y(S+\mu I)^2c_++A_2Y(\exp(S+\mu I)-I)c_+,\\
\MM_{0}(Y,S)c_+&=&
A_1Y(S^2+2\mu S)c_++e^{\mu}A_2Y(\exp(S)-I)c_+\\
\MM_{1}(Y,S)c_+&=&
A_1Y(S^2c_+)+e^\mu A_2Y(\exp(S)-I-S)c_+\\
\MM_{N}(Y,S)c_+&\approx&
e^\mu A_2Y
\left(
\sum_{i=N+1}^{i_{\max}}
\frac{S^ic_+}{i!}\right), \;\;N>1
\end{eqnarray*}
In the last formula, $i_{\max}$ is chosen such that the expression has
converged to machine precision. Since this is not computationally
expensive, we can roughly overestimate $i_{\max}$. In this  example 
it was sufficient to take  $i_{\max}=40$.

In the outer algorithm (Algorithm~\ref{alg:exprestart}) 
we classified a Ritz value as converged (locked) 
when the absolute residual was smaller than $1000\times \epsm$. 
We selected the largest eigenvalues of $H_k$ as the wanted eigenvalues.

The convergence is illustrated for two runs 
in Figure~\ref{fig:combined_hadeler2}   and Figure~\ref{fig:combined_hadeler3}. In order to illustrate the similarity with  implicit restarting
in \cite{Sorensen:1992:IMPLICIT}, 
we also carried out the infinite Arnoldi 
method
with true implicit restarting by restarting only with polynomials, instead
of using Algorithm~\ref{alg:exprestart}. We clearly see that at least in 
the beginning of the iteration, the convergence of 
Algorithm~\ref{alg:exprestart} is similar to the convergence of 
IRAM. Note that
IRAM in this setting exhibits a growth of the basis matrix
 and it is hence 
considerably slower. We show the number of locked Ritz-values Table~\ref{tbl:indicators}. 
Moreover, we quantify the impact of the procedure to impose the structure in the restart by inspecting the approximation in \eqref{eq:lockedfact}.
We define $\gamma$  as the norm of the difference of the left and right-hand side of \eqref{eq:lockedfact}. Lemma~\ref{thm:structinvres} 
shows that this difference
is independent of $\theta$ and provides a computable expression. More precisely, 
\begin{multline}
\gamma:=
\left\|(\BBB \hat{G}_{p_l})(\theta)-\hat{G}_{p_l+1}(\theta)
\begin{pmatrix}R_{11}\\0 \end{pmatrix}\right\|_2=%
\\
\left\|(\BBB \hat{G}_{p_l})(\theta)-\hat{G}_{p_l}(\theta)
R_{11}\right\|_2
=\| M(0)^{-1}\MM(Y,S)S^{-1}\|_2,
\end{multline}
where we used that $\hat{G}_{p_l}$ has the structure $\hat{G}_{p_l}(\theta)=Y\exp(\theta R_{11}^{-1})$.
The values of $\gamma$ are also given in Table~\ref{tbl:indicators}. 
They are, as expected, of the same order of magnitude as the locking tolerance.

\begin{table}[t]
\begin{center}
\begin{tabular}{c|c|c||c|c} 
&%
\multicolumn{2}{c||}{Run 1} &
\multicolumn{2}{c}{Run 2} \\\hline
Outer iteration &$p_l$ & $\gamma$ &  $p_l$&$\gamma$\\
\hline
1 &0 &   &  0 & 0   \\
2 &1 &$1.0\times 10^{-15}$   &  0 & 0\\ 
3 &2 &$5.7\times 10^{-14}$   &  3 & $6.4\times 10^{-14}$ \\
4 &2 &$5.7\times 10^{-14}$   &  3 & $6.4\times 10^{-14}$ \\
5 &3 &$5.8\times 10^{-14}$   &  3  &$1.4\times 10^{-14}$ \\
6 &3 &$5.8\times 10^{-14}$   &  4  &$1.4\times 10^{-14}$ \\
7 &4 &$7.3\times 10^{-13}$  &  5  &$1.4\times 10^{-14}$ \\
8 &10&$2.3\times 10^{-13}$  & &\\ 
\end{tabular}
\end{center}
\caption{The indicator value and number of locked Ritz values for the two runs 
of the example of Hadeler in Section~\ref{sect:hadeler}. 
Run 1 corresponds to Figure~\ref{fig:combined_hadeler2} and Run 2 corresponds to 
Figure~\ref{fig:combined_hadeler3}. The outer iteration count represents the number of loops carried out in Algorithm~\ref{alg:exprestart}.
}
\label{tbl:indicators}
\end{table}

\begin{figure}[ht]
\begin{center}
\scalebox{0.9}{\includegraphics{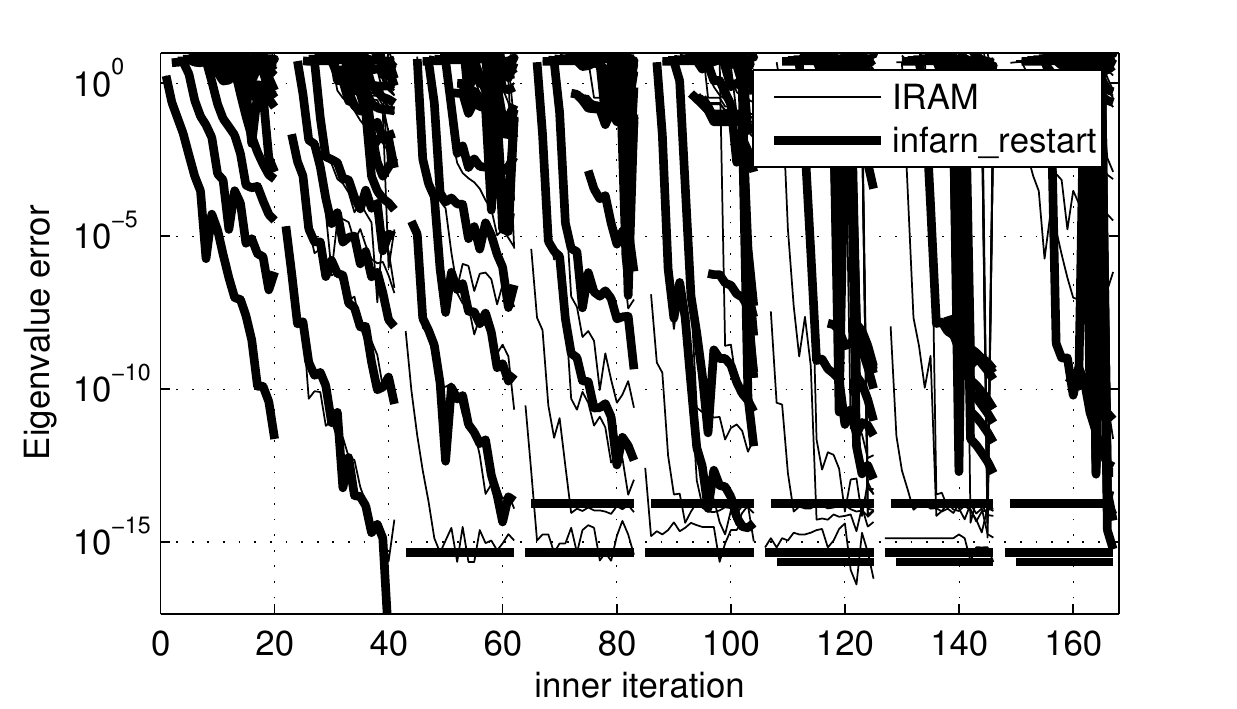}}
\end{center}
\caption{Convergence of Algorithm~\ref{alg:exprestart} (thick) and 
implicitly restarted Arnoldi \cite{Sorensen:1992:IMPLICIT} (thin)
($\kmax=20$, $p=10$, $\mu=-1$) for the Hadeler example in Section~\ref{sect:hadeler}}
\label{fig:combined_hadeler2}
\end{figure}

\begin{figure}[ht]
\begin{center}
\scalebox{0.9}{\includegraphics{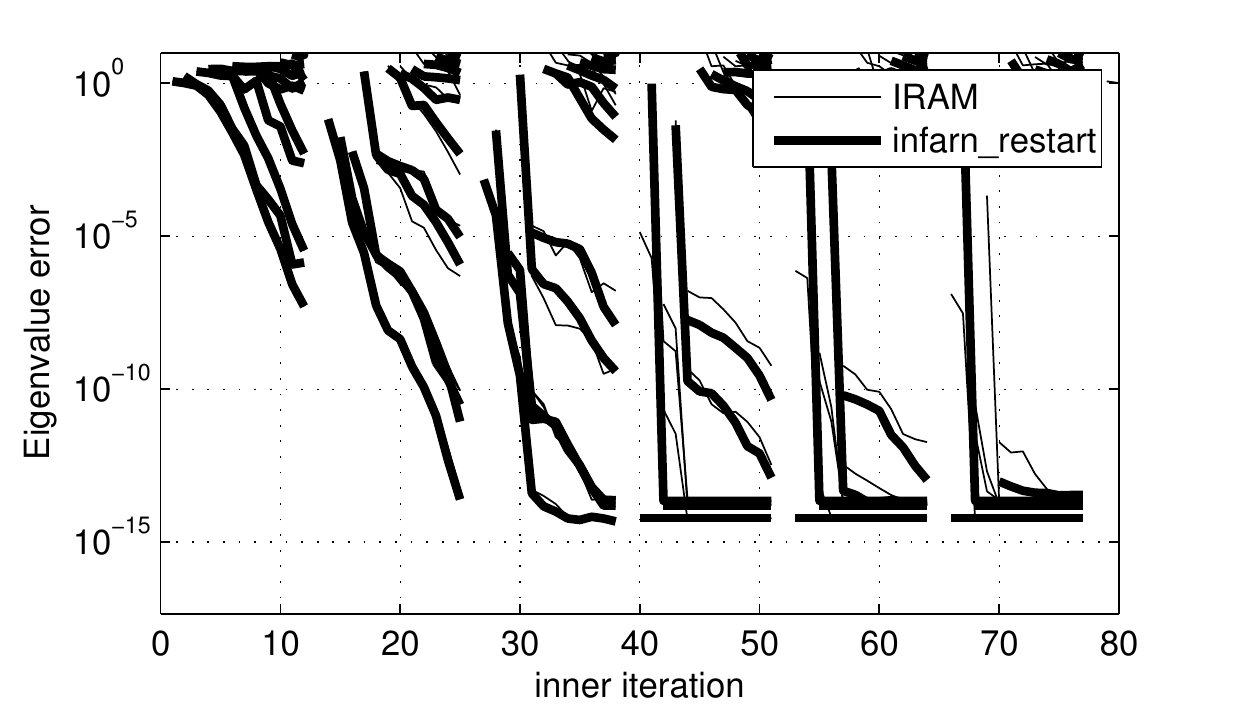}}
\end{center}
\caption{Convergence of Algorithm~\ref{alg:exprestart} (thick) and 
implicitly restarted Arnoldi \cite{Sorensen:1992:IMPLICIT} (thin)
($\kmax=12$, $p=5$, $\mu=3+5\iota$) for the Hadeler example in Section~\ref{sect:hadeler}}
\label{fig:combined_hadeler3}
\end{figure}

\begin{figure}[ht]
\begin{center}
\scalebox{0.9}{\includegraphics{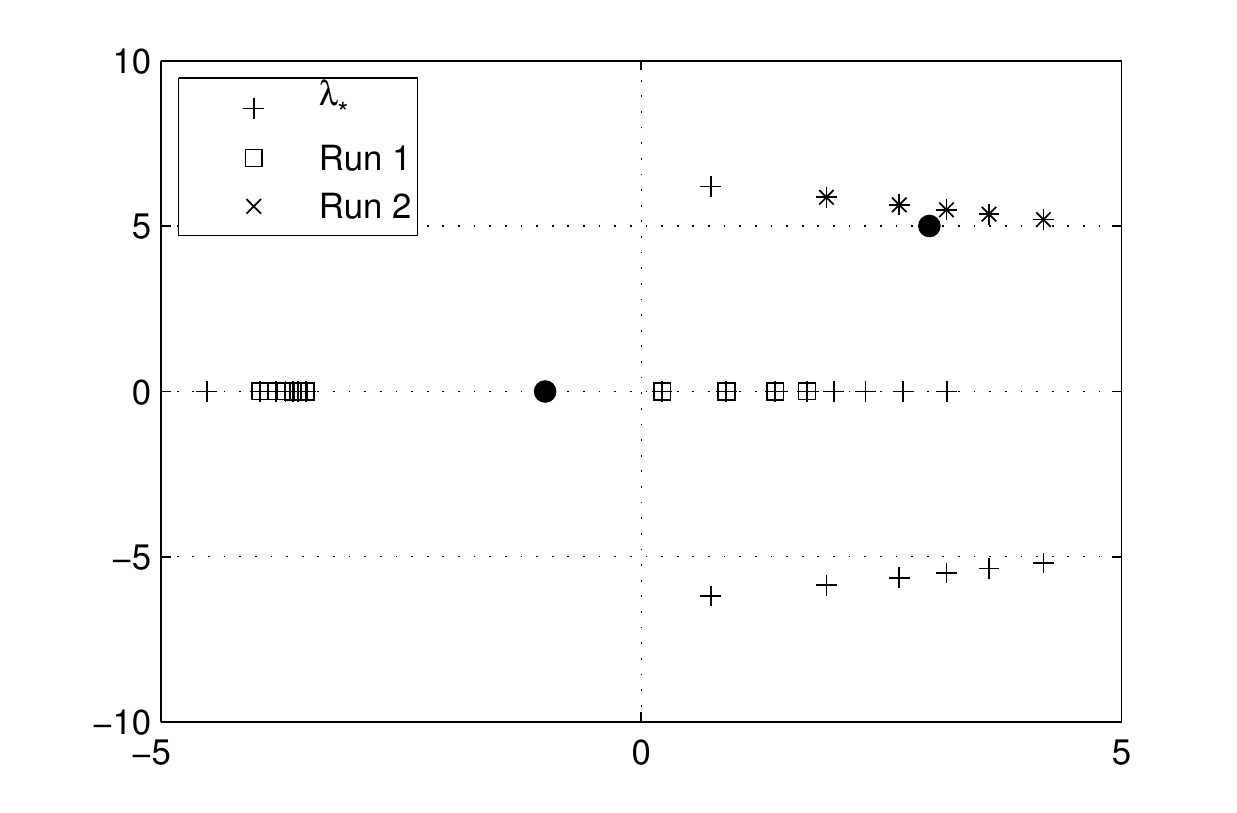}}
\end{center}
\caption{Computed eigenvalues and shifts for the Hadeler example}
\label{fig:combined_hadeler_eigs}
\end{figure}

\subsection{A large-scale square-root example}\label{sect:sqrt}
We considered the same example as in  \cite[Section~7.2]{Jarlebring:2010:TRINFARNOLDI},
which is the problem called {\tt gun} in the problem collection \cite{Betcke:2010:NLEVPCOLL}
and stems
 from \cite{Liao:2006:SOLVING}. 
It is currently the largest example, among those examples 
in  the collection \cite{Betcke:2010:NLEVPCOLL}
which are neither 
polynomial eigenvalue problems  nor rational eigenvalue problems.

In order to focus on a particular region in the complex plane we
introduce (as in \cite{Jarlebring:2010:TRINFARNOLDI}) a shift $\mu$
and a scaling $\gamma$, for which the nonlinear eigenvalue problem is 
\[
  M(\lambda)=
A_0
-(\gamma\lambda+\mu)A_1+
\iota\sqrt{\gamma\lambda+\mu-\sigma_1^2}A_2+ 
\iota\sqrt{\gamma\lambda+\mu-\sigma_2^2}A_3
\]
where $\sigma_1=0$ and $\sigma_2=108.8774$ and  $\iota^2=-1$. We selected
$\gamma=300^2-200^2$ and $\mu=250^2$ since this transforms the region
of interest to be essentially within the unit circle.

%
%
In order to compute a formula for $x_{+,0}$ in \eqref{eq:y0}, we 
need in particular
\begin{multline}
  \MM(Y,S)=  
A_0Y
-A_1Y(\gamma S+\mu I_p)+\notag\\
\iota A_2Y\sqrt{\gamma S+(\mu-\sigma_1^2)I_p} +
\iota A_3Y\sqrt{\gamma S+(\mu-\sigma_2^2)I_p} \label{eq:sqrtMMN}
\end{multline}
where $\sqrt{Z}$ denotes the matrix square root (principal branch).

%

We will partially base  the formulas on  
the Taylor coefficients of the square root in order to compute
$\MM_N$ (needed in the
computation of $x_{0,+}$ in \eqref{eq:y0}). We will use
\[
  \sqrt{\gamma\lambda+\mu-\sigma_j^2}=
\alpha_{0,j}+
\alpha_{1,j}\lambda+
\alpha_{2,j}\lambda^2+\cdots
\]
where
\begin{subequations}\label{eq:sqrtcoeffs}
\begin{eqnarray}
  \alpha_{0,j}&=&\sqrt{\mu-\sigma_j^2}\\
  \alpha_{k,j}&=&
\left(\frac{\gamma}{2}\right)
\left(-\frac{\gamma}{2}\right)
\left(-\frac{3\gamma}{2}\right)
\cdots
\left(-\frac{(2k-3)\gamma}{2}\right)(\mu-\sigma_j^2)^{1/2-k},\;\; k>0.
\end{eqnarray}
\end{subequations}
%
\begin{table}[t]
\begin{center}
\begin{tabular}{c|c|c|c} 
 & $\kmax=50$      & $\kmax=30$ & $\kmax=25$\\  \hline\hline
nof. restarts    & 0       & 1 & 3  \\
\hline
total CPU    & 35.7s       & 21.0s & 23.7s  \\
LU decomp.   & 2.1s        & 2.1s  &  2.1s \\
\gramschmidt & 23.7s       & 12.9s &  15.1s  \\
computing $x_{+}$  &  6.8s& 6.9s &  3.4s \\
\hline
Memory  usage & $\sim 200~\mathrm{MB}$ &  $\sim 78~\mathrm{MB}$ & $\sim 58~\mathrm{MB}$   \\
\end{tabular}
\end{center}
\caption{Consumption of memory resources and profiling times, for some
choices of the restart parameter $\kmax$ and $p=10$. Memory in megabytes (MB) and CPU time in seconds. 
}
\label{tbl:sqrt_resources}
\end{table}

This can be used to compute of $\MM_N$, as follows,
\begin{subequations}\label{eq:sqrtMM}
\begin{eqnarray}
  \MM_{0}(Y,S)c_+  &=&  \MM(Y,S)c_+- M(0)Yc_+ \label{eq:sqrtMMN0}\\
  \MM_{1}(Y,S)c_+  &=&  \MM_{0}(Y,S)c_+-M'(0)Y(Sc_+) \label{eq:sqrtMMN1}\\
  \MM_{N}(Y,S)c_+  &=&  
\sum_{i=N+1}^{\infty}\frac{1}{i!}M^{(i)}(0)YS^ic_+  \notag \\
&\approx& \iota A_2\left(Y \sum_{i=N+1}^{i_{\max}}\alpha_{i,1}S^ic_+\right) 
+ \iota A_3\left(Y\sum_{i=N+1}^{i_{\max}}\alpha_{i,2}S^ic_+\right)%
,\;\;
N>1.\;\;\;\;\;\;\;\;\;\;\;\;\label{eq:sqrtMMNp}
\end{eqnarray}
\end{subequations}
%
Note that the sums in \eqref{eq:sqrtMMNp} are operations with vectors
of relatively small dimension  and can be computed efficiently. We selected the
number of terms  $i_{\max}$ adaptively such that $\|S^{i_{\max}}c_+\||\alpha_{i_{\max},k}|\ll\varepsilon_{\rm mach}$.

This results in the following formulas which we used for the computation of $x_{+,0}$
\[
x_{+,0}
=-M(0)^{-1}(\MM_{0}(Y,S)c_+), \textrm{ for }N=0
\]
\[
x_{+,0}
=-M(0)^{-1}(\MM_1(Y,S)c_++M'(0)x_{+,1}), \textrm{ for }N=1,
\]
and for $N>1$, 
\[
x_{+,0}
=-M(0)^{-1}\left(\MM_N(Y,S)c_++
\iota A_2\sum_{j=1}^Nx_{+,j}(\alpha_{j,1} (j!)) + 
\iota A_3\sum_{j=1}^Nx_{+,j}(\alpha_{j,2}(j!))\right).
\]
The matrix $M(0)$ was factorized (with an LU-factorization) before starting the iteration, such that
$M(0)^{-1}b$  could be computed efficiently.

We first wish to illustrate that the restarting and structure exploitation 
can considerably reduce both memory and CPU usage.
In Table~\ref{tbl:sqrt_resources} we compare runs for 
the standard version 
of the infinite Arnoldi method \cite{Jarlebring:2010:TRINFARNOLDI} 
(first column) with the restarting algorithm
(Algorithm~\ref{alg:exprestart}) for two choices of the parameter
$\kmax$. The iteration was terminated when $p=10$ eigenvalues were
found. We clearly see that for the choices of $\kmax$ there is a
considerable reduction in memory and some reduction in computation
time. 
%

In Figure~\ref{fig:sqrt_p9} and Figure~\ref{fig:sqrt_p14} we 
illustrate that the algorithm scales reasonably well with $p$, 
i.e., the number of wanted eigenvalues. When we increase $p$, we need
more outer iterations, but eventually the algorithm usually converges
for reasonably large $p$.

\begin{figure}[ht]
\begin{center}
\scalebox{0.9}{\includegraphics{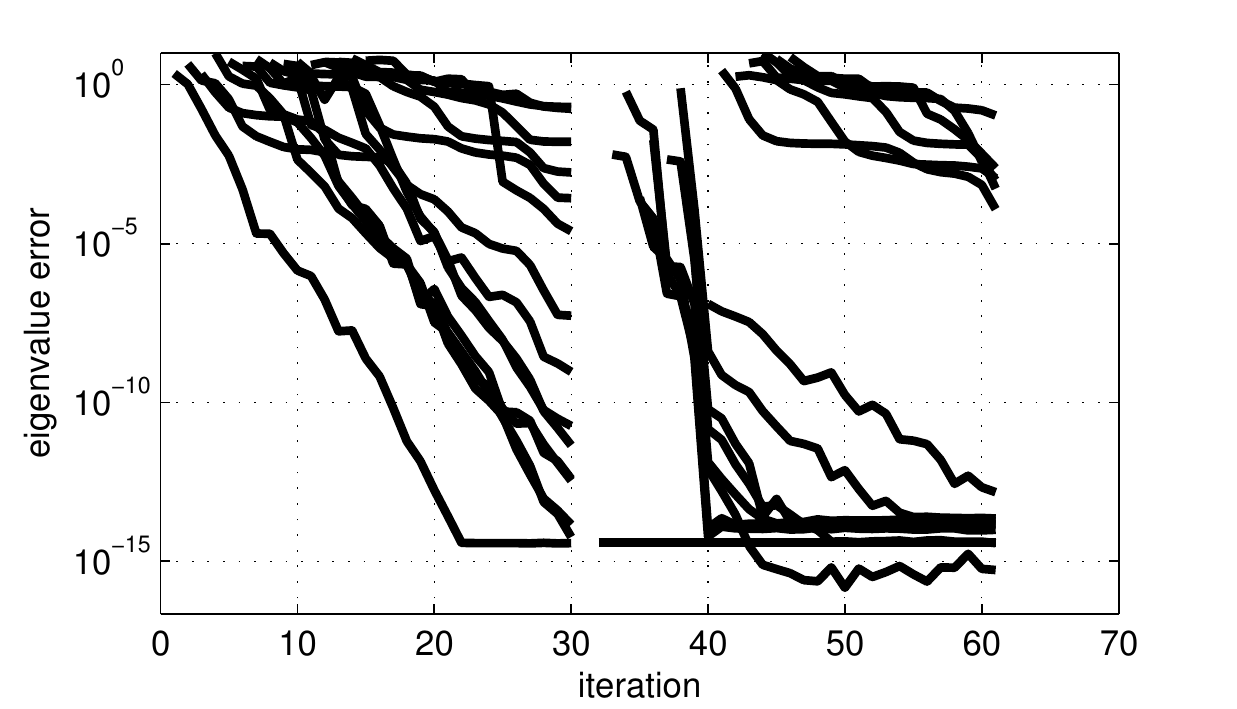}}
\end{center}
\caption{Convergence history for Algorithm~\ref{alg:exprestart} with
  the  example involving a square root in Section~\ref{sect:sqrt} ($p=9$)}
\label{fig:sqrt_p9}
\end{figure}
\begin{figure}[ht]
\begin{center}
\scalebox{0.9}{\includegraphics{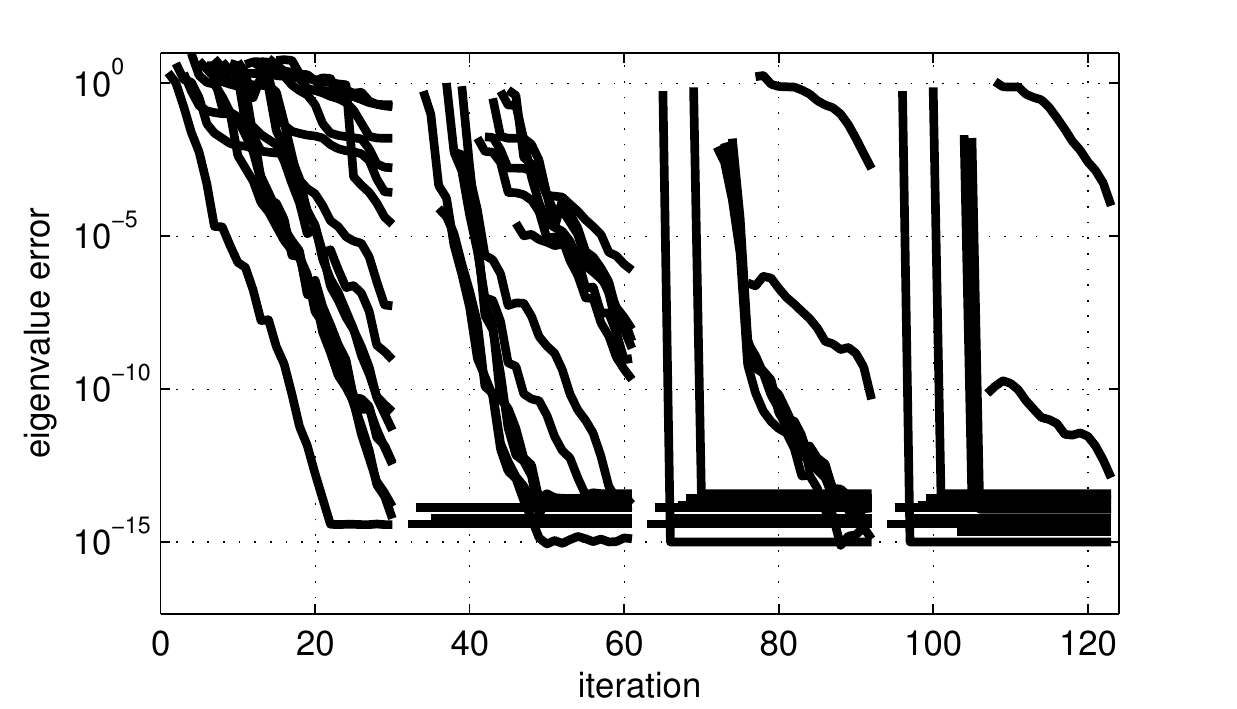}}
\end{center}
\caption{Convergence history for Algorithm~\ref{alg:exprestart} with
  the  example involving a square root in Section~\ref{sect:sqrt} ($p=14$)}
\label{fig:sqrt_p14}
\end{figure}

%
%
%
\section{Concluding remarks}
We have in this work shown how the partial Schur factorization
of an operator $\BBB$ can be computed using a variation
of the procedures to compute partial Schur factorization
for matrices. 
Several variations of the results for matrices
appear to be possible to adapt. Concepts like thick restarting,
purging and other selection  
strategies, appear to carry over but deserve further attention.
We also wish to point that many of the
results allow to be adapted or used in other ways. 
In this paper we also presented a
Taylor-like scalar product, which could, essentially be replaced 
by any suitable scalar product.

%

\bibliographystyle{plain}
\bibliography{fullbib}

\appendix
\section{A technical lemma}

\begin{lemma}\label{thm:structinvres}
Consider  $Y\in\CC^{n\times p}$  and $S\in\CC^{p\times p}$, where $S$ is invertible. 
Let $F(\theta):=Y\exp(\theta S)$. Then, 
\begin{equation}
(\BBB F)(\theta)-F(\theta)S^{-1}=
-M(0)^{-1}\MM(Y,S)S^{-1}.\label{eq:structinvres}
\end{equation}
\end{lemma}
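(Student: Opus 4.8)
The plan is to verify the identity \eqref{eq:structinvres} by the same "derivative plus one point" strategy used in the proof of Theorem~\ref{thm:structaction}: two functions that have the same derivative and agree at $\theta=0$ are equal. Write $L(\theta):=(\BBB F)(\theta)$ and $R(\theta):=F(\theta)S^{-1}-M(0)^{-1}\MM(Y,S)S^{-1}$, where the second term is the constant matrix appearing on the right of \eqref{eq:structinvres}; I want to show $L(\theta)=R(\theta)$ for all $\theta$.

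First I would differentiate. Since the action of $\BBB$ is integration (plus a constant), $\frac{d}{d\theta}(\BBB F)(\theta)=F(\theta)$. On the other side, $\frac{d}{d\theta}\bigl(F(\theta)S^{-1}\bigr)=Y\exp(\theta S)S\,S^{-1}=Y\exp(\theta S)=F(\theta)$, and the constant term differentiates away. Hence $L'(\theta)=R'(\theta)$ identically, and in particular $L(\theta)-R(\theta)$ is a constant matrix in $\CC^{n\times p}$; so it suffices to evaluate both sides at $\theta=0$.

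Next I would evaluate at $\theta=0$. Here $R(0)=F(0)S^{-1}-M(0)^{-1}\MM(Y,S)S^{-1}=YS^{-1}-M(0)^{-1}\MM(Y,S)S^{-1}$. For the left side, the definition of $\BBB$ gives $(\BBB F)(0)=C(F)=\bigl(B(\tfrac{d}{d\theta})F\bigr)(0)$. Using $F(\theta)=Y\exp(\theta S)$, for any analytic scalar $g$ one has $\bigl(g(\tfrac{d}{d\theta})Y\exp(\theta S)\bigr)(0)=Y g(S)$, so $(\BBB F)(0)=Y B(S)$, where $B(S)=B_1b_1(S)+\cdots+B_mb_m(S)$. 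Thus the claimed identity at $\theta=0$ reads $Y B(S)=YS^{-1}-M(0)^{-1}\MM(Y,S)S^{-1}$. The main (and really only) work is to translate this back to $M$: from \eqref{eq:BiMi}, $B_i=M(0)^{-1}M_i$ and $b_i(\lambda)=(f_i(0)-f_i(\lambda))/\lambda$, so at the matrix level $b_i(S)=(f_i(0)I-f_i(S))S^{-1}$, giving $YB(S)=M(0)^{-1}\sum_i M_iY\bigl(f_i(0)I-f_i(S)\bigr)S^{-1}=M(0)^{-1}\bigl(M(0)Y-\MM(Y,S)\bigr)S^{-1}=YS^{-1}-M(0)^{-1}\MM(Y,S)S^{-1}$, which is exactly $R(0)$. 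I expect the only delicate point to be the legitimacy of passing from the scalar relations $b_i(\lambda)=(f_i(0)-f_i(\lambda))/\lambda$ to the matrix identities $b_i(S)=(f_i(0)I-f_i(S))S^{-1}$ and the termwise action of $B(\tfrac{d}{d\theta})$ on $Y\exp(\theta S)$ — both follow from analyticity of the $f_i$ on $\Omega$, invertibility of $S$, and $\sigma(S)\subset\Omega$ (equivalently $\sigma(\Lambda)=\sigma(S^{-1})\subset\Omega$), which are in force by hypothesis, so this is routine matrix-function bookkeeping rather than a real obstacle.
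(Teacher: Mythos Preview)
Your proof is essentially the same as the paper's: differentiate both sides to see the difference is constant, then evaluate at $\theta=0$ and unwind the relation between $B_i,b_i$ and $M_i,f_i$ via \eqref{eq:BiMi}. One notational slip to fix: from the scalar identity $\bigl(g(\tfrac{d}{d\theta})Y\exp(\theta S)\bigr)(0)=Yg(S)$ you cannot conclude $(\BBB F)(0)=Y\,B(S)$, because $B$ is matrix-valued and the expression $B(S)=\sum_i B_i b_i(S)$ is not even dimensionally consistent ($B_i\in\CC^{n\times n}$, $b_i(S)\in\CC^{p\times p}$); the correct statement is $(\BBB F)(0)=\sum_i B_i\,Y\,b_i(S)$, which is exactly what the paper writes and what you in fact use in the very next line when you place $M_iY$ in the right order.
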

\begin{proof}
We prove the theorem by showing that the derivative of the function
relation \eqref{eq:structinvres} holds for any $\theta$ and that the
relation holds in one point $\theta=0$. Note that the right-hand side
of \eqref{eq:structinvres} is  constant (with respect to  $\theta$) 
and the derivative of the left-hand side reduces to 
\[
   F(\theta)-F'(\theta)S^{-1}=F(\theta)-Y\exp(\theta S)SS^{-1}=0,
\]
by definition of $\BBB$ and differentiation of $\exp(\theta S)$.

From the definition of $\BBB$ and evaluation of the left-hand side of \eqref{eq:structinvres} 
 at $\theta=0$ we have, 
\begin{equation}
   (\BBB F)(0)-F(0)S^{-1}=\left(B(\frac{d}{d\theta})Y\exp(\theta S)\right)(0)-YS^{-1}.\label{eq:BBBF0}
\end{equation}
Note that for an analytic scalar  function $b:\CC\rightarrow\CC$, 
\[
  \left(b(\frac{d}{d\theta})\exp(\theta S)\right)(0)=b(S).
\]
Hence,
\begin{multline*}
\left(B(\frac{d}{d\theta})Y\exp(\theta S)\right)(0)=
B_1Y(b_1(\frac{d}{d\theta})\exp(\theta S))(0)+\cdots+B_mY(b_m(\frac{d}{d\theta})\exp(\theta S))(0)=\\
B_1Yb_1(S)+\cdots+B_mYb_m(S).
\end{multline*}
Moreover, by using the relation between $b_i$ and $f_i$ and $M_i$ and
$B_i$ given by \eqref{eq:BiMi} we have, 
\begin{multline}
B_1Yb_1(S)+\cdots+B_mYb_m(S)=\\M(0)^{-1}\big[M_1Y(f_1(0)I-f_1(S))S^{-1}+\cdots+M_mY(f_m(0)I-f_m(S))S^{-1}\big]=\\
M(0)^{-1}\big[\MM(Y,0)S^{-1}-\MM(Y,S)S^{-1}\big]=
YS^{-1}-M(0)^{-1}\MM(Y,S)S^{-1}.\label{eq:BYbk}
\end{multline}
The proof is completed by cancelling the term $YS^{-1}$ when inserting \eqref{eq:BYbk} into \eqref{eq:BBBF0}.
\end{proof}

\end{document}